\documentclass[reqno, 12pt]{amsart}
\pdfoutput=1

\def\ssign{\textsection\nobreak\hspace{1pt plus 0.3pt}}
\makeatletter
\let\origsection=\section 
\def\mysection{\@mystartsection{section}{1}\z@{.7\linespacing\@plus\linespacing}{.5\linespacing}{\normalfont\scshape\centering\ssign}}
\def\section{\@ifstar{\origsection*}{\mysection}}
\def\appendix{\par\c@section\z@ \c@subsection\z@
   \let\sectionname\appendixname
   \let\section=\origsection
   \def\thesection{\@Alph\c@section}} 
\def\@mystartsection#1#2#3#4#5#6{\if@noskipsec \leavevmode \fi
 \par \@tempskipa #4\relax
 \@afterindenttrue
 \ifdim \@tempskipa <\z@ \@tempskipa -\@tempskipa \@afterindentfalse\fi
 \if@nobreak \everypar{}\else
     \addpenalty\@secpenalty\addvspace\@tempskipa\fi
 \@dblarg{\@mysect{#1}{#2}{#3}{#4}{#5}{#6}}}
\def\@mysect#1#2#3#4#5#6[#7]#8{\edef\@toclevel{\ifnum#2=\@m 0\else\number#2\fi}\ifnum #2>\c@secnumdepth \let\@secnumber\@empty
  \else \@xp\let\@xp\@secnumber\csname the#1\endcsname\fi
  \@tempskipa #5\relax
  \ifnum #2>\c@secnumdepth
    \let\@svsec\@empty
  \else
    \refstepcounter{#1}\edef\@secnumpunct{\ifdim\@tempskipa>\z@ \@ifnotempty{#8}{\@nx\enspace}\else
        \@ifempty{#8}{.}{\@nx\enspace}\fi
    }\@ifempty{#8}{\ifnum #2=\tw@ \def\@secnumfont{\bfseries}\fi}{}\protected@edef\@svsec{\ifnum#2<\@m
        \@ifundefined{#1name}{}{\ignorespaces\csname #1name\endcsname\space
        }\fi
      \@seccntformat{#1}}\fi
  \ifdim \@tempskipa>\z@ \begingroup #6\relax
    \@hangfrom{\hskip #3\relax\@svsec}{\interlinepenalty\@M #8\par}\endgroup
    \ifnum#2>\@m \else \@tocwrite{#1}{#8}\fi
  \else
  \def\@svsechd{#6\hskip #3\@svsec
    \@ifnotempty{#8}{\ignorespaces#8\unskip
       \@addpunct.}\ifnum#2>\@m \else \@tocwrite{#1}{#8}\fi
  }\fi
  \global\@nobreaktrue
  \@xsect{#5}}
\makeatother

\usepackage{amsmath,amssymb,amsthm}
\usepackage{mathrsfs}
\usepackage{mathabx}\changenotsign
\usepackage{dsfont}
\usepackage{bbm}
\usepackage{todonotes}

\usepackage{xcolor}
\usepackage[backref=section]{hyperref}
\usepackage[ocgcolorlinks]{ocgx2}
\hypersetup{
	colorlinks=true,
	linkcolor={red!60!black},
	citecolor={green!60!black},
	urlcolor={blue!60!black},
}


\usepackage[open,openlevel=2,atend]{bookmark}

\usepackage[abbrev,msc-links,backrefs]{amsrefs}
\usepackage{doi}

\renewcommand{\PrintDOI}[1]{\doi{#1}}

\usepackage[T1]{fontenc}
\usepackage{lmodern}
\usepackage[babel]{microtype}
\usepackage[english]{babel}

\linespread{1.3}
\usepackage{geometry}
\geometry{left=27.5mm,right=27.5mm, top=25mm, bottom=25mm}

\numberwithin{equation}{section}
\numberwithin{figure}{section}

\usepackage{enumitem}

\let\polishlcross=\l
\def\l{\ifmmode\ell\else\polishlcross\fi}

\let\emptyset=\varnothing
\let\setminus=\smallsetminus

\makeatletter
\def\moverlay{\mathpalette\mov@rlay}
\def\mov@rlay#1#2{\leavevmode\vtop{   \baselineskip\z@skip \lineskiplimit-\maxdimen
		\ialign{\hfil$\m@th#1##$\hfil\cr#2\crcr}}}
\newcommand{\charfusion}[3][\mathord]{
	#1{\ifx#1\mathop\vphantom{#2}\fi
		\mathpalette\mov@rlay{#2\cr#3}
	}
	\ifx#1\mathop\expandafter\displaylimits\fi}
\makeatother

\newcommand{\dcup}{\charfusion[\mathbin]{\cup}{\cdot}}

\DeclareFontFamily{U}  {MnSymbolC}{}
\DeclareSymbolFont{MnSyC}         {U}  {MnSymbolC}{m}{n}
\DeclareFontShape{U}{MnSymbolC}{m}{n}{
	<-6>  MnSymbolC5
	<6-7>  MnSymbolC6
	<7-8>  MnSymbolC7
	<8-9>  MnSymbolC8
	<9-10> MnSymbolC9
	<10-12> MnSymbolC10
	<12->   MnSymbolC12}{}
\DeclareMathSymbol{\powerset}{\mathord}{MnSyC}{180}

\usepackage{tikz}
\usetikzlibrary{calc,decorations.pathmorphing}
\usetikzlibrary{arrows,decorations.pathreplacing}
\pgfdeclarelayer{background}
\pgfdeclarelayer{foreground}
\pgfdeclarelayer{front}
\pgfsetlayers{background,main,foreground,front}

\usepackage{multicol}
\usepackage{subcaption}

\newcommand{\pedge}[9]{
	
	\ifx\relax#6\relax
	\def\qoffs{0pt}
	\else
	\def\qoffs{#6}
	\fi
	
	\def\phedge{
		($#1+#5!\qoffs!-90:#2-#5$) -- 
		($#2+#1!\qoffs!-90:#3-#1$) -- 
		($#3+#2!\qoffs!-90:#4-#2$) -- 
		($#4+#3!\qoffs!-90:#5-#3$) -- 
		($#5+#4!\qoffs!-90:#1-#4$) -- cycle}

	\coordinate (12) at ($#1!\qoffs!90:#2$);
	\coordinate (15) at ($#1!\qoffs!-90:#5$);
	\coordinate (23) at ($#2!\qoffs!90:#3$);
	\coordinate (21) at ($#2!\qoffs!-90:#1$);
	\coordinate (34) at ($#3!\qoffs!90:#4$);
	\coordinate (32) at ($#3!\qoffs!-90:#2$);
	\coordinate (45) at ($#4!\qoffs!90:#5$);
	\coordinate (43) at ($#4!\qoffs!-90:#3$);
	\coordinate (51) at ($#5!\qoffs!90:#1$);
	\coordinate (54) at ($#5!\qoffs!-90:#4$);

	\def\nphedge{
		(15) let \p1=($(15)-#1$), \p2=($(12)-#1$) in 
		arc[start angle={atan2(\y1,\x1)}, delta angle={atan2(\y2,\x2)-atan2(\y1,\x1)-360*(atan2(\y2,\x2)-atan2(\y1,\x1)>0)}, x radius=\qoffs, y radius=\qoffs] --
		(21) let \p1=($(21)-#2$), \p2=($(23)-#2$) in 
		arc[start angle={atan2(\y1,\x1)}, delta angle={atan2(\y2,\x2)-atan2(\y1,\x1)-360*(atan2(\y2,\x2)-atan2(\y1,\x1)>0)}, x radius=\qoffs, y radius=\qoffs] --
		(32) let \p1=($(32)-#3$), \p2=($(34)-#3$) in 
		arc[start angle={atan2(\y1,\x1)}, delta angle={atan2(\y2,\x2)-atan2(\y1,\x1)-360*(atan2(\y2,\x2)-atan2(\y1,\x1)>0)}, x radius=\qoffs, y radius=\qoffs] --
		(43) let \p1=($(43)-#4$), \p2=($(45)-#4$) in 
		arc[start angle={atan2(\y1,\x1)}, delta angle={atan2(\y2,\x2)-atan2(\y1,\x1)-360*(atan2(\y2,\x2)-atan2(\y1,\x1)>0)}, x radius=\qoffs, y radius=\qoffs] --
		(54) let \p1=($(54)-#5$), \p2=($(51)-#5$) in 
		arc[start angle={atan2(\y1,\x1)}, delta angle={atan2(\y2,\x2)-atan2(\y1,\x1)-360*(atan2(\y2,\x2)-atan2(\y1,\x1)>0)}, x radius=\qoffs, y radius=\qoffs] --
		cycle}
	
	\ifx\relax#7\relax
	\def\plwidth{1pt}
	\else
	\def\plwidth{#7}
	\fi
	
	\ifx\relax#9\relax
	\fill \nphedge;
	\else
	\fill[#9]\nphedge;
	\fi
	
	\ifx\relax#8\relax
	\draw[line width=\plwidth,rounded corners=\qoffs]\nphedge;
	\else
	\draw[line width=\plwidth,#8]\nphedge;
	\fi
}

\newcommand{\qedge}[7]{
	
	\ifx\relax#4\relax
	\def\qoffs{0pt}
	\else
	\def\qoffs{#4}
	\fi
	
	\def\qhedge{
		($#1+#3!\qoffs!-90:#2-#3$) --
		($#2+#1!\qoffs!-90:#3-#1$) --
		($#3+#2!\qoffs!-90:#1-#2$) -- cycle}

	\coordinate (12) at ($#1!\qoffs!90:#2$);
	\coordinate (13) at ($#1!\qoffs!-90:#3$);
	\coordinate (23) at ($#2!\qoffs!90:#3$);
	\coordinate (21) at ($#2!\qoffs!-90:#1$);
	\coordinate (31) at ($#3!\qoffs!90:#1$);
	\coordinate (32) at ($#3!\qoffs!-90:#2$);
	
	\def\nqhedge{
		(13) let \p1=($(13)-#1$), \p2=($(12)-#1$) in
		arc[start angle={atan2(\y1,\x1)}, delta angle={atan2(\y2,\x2)-atan2(\y1,\x1)-360*(atan2(\y2,\x2)-atan2(\y1,\x1)>0)}, x radius=\qoffs, y radius=\qoffs] --
		(21) let \p1=($(21)-#2$), \p2=($(23)-#2$) in
		arc[start angle={atan2(\y1,\x1)}, delta angle={atan2(\y2,\x2)-atan2(\y1,\x1)-360*(atan2(\y2,\x2)-atan2(\y1,\x1)>0)}, x radius=\qoffs, y radius=\qoffs] --
		(32) let \p1=($(32)-#3$), \p2=($(31)-#3$) in
		arc[start angle={atan2(\y1,\x1)}, delta angle={atan2(\y2,\x2)-atan2(\y1,\x1)-360*(atan2(\y2,\x2)-atan2(\y1,\x1)>0)}, x radius=\qoffs, y radius=\qoffs] --
		cycle}
	
	\ifx\relax#5\relax
	\def\qlwidth{1pt}
	\else
	\def\qlwidth{#5}
	\fi
	
	\ifx\relax#7\relax
	\fill \nqhedge;
	\else
	\fill[#7]\nqhedge;
	\fi
	
	\ifx\relax#6\relax
	\draw[line width=\qlwidth,rounded corners=\qoffs]\nqhedge;
	\else
	\draw[line width=\qlwidth,#6]\nqhedge;
	\fi
}

\newcommand{\redge}[8]{
	
	\ifx\relax#5\relax
	\def\qoffs{0pt}
	\else
	\def\qoffs{#5}
	\fi
	
	\def\rhedge{
		($#1+#4!\qoffs!-90:#2-#4$) -- 
		($#2+#1!\qoffs!-90:#3-#1$) -- 
		($#3+#2!\qoffs!-90:#4-#2$) -- 
		($#4+#3!\qoffs!-90:#1-#3$) -- cycle}

	\coordinate (12) at ($#1!\qoffs!90:#2$);
	\coordinate (14) at ($#1!\qoffs!-90:#4$);
	\coordinate (23) at ($#2!\qoffs!90:#3$);
	\coordinate (21) at ($#2!\qoffs!-90:#1$);
	\coordinate (34) at ($#3!\qoffs!90:#4$);
	\coordinate (32) at ($#3!\qoffs!-90:#2$);
	\coordinate (41) at ($#4!\qoffs!90:#1$);
	\coordinate (43) at ($#4!\qoffs!-90:#3$);
	
	\def\nrhedge{
		(14) let \p1=($(14)-#1$), \p2=($(12)-#1$) in 
		arc[start angle={atan2(\y1,\x1)}, delta angle={atan2(\y2,\x2)-atan2(\y1,\x1)-360*(atan2(\y2,\x2)-atan2(\y1,\x1)>0)}, x radius=\qoffs, y radius=\qoffs] --
		(21) let \p1=($(21)-#2$), \p2=($(23)-#2$) in 
		arc[start angle={atan2(\y1,\x1)}, delta angle={atan2(\y2,\x2)-atan2(\y1,\x1)-360*(atan2(\y2,\x2)-atan2(\y1,\x1)>0)}, x radius=\qoffs, y radius=\qoffs] --
		(32) let \p1=($(32)-#3$), \p2=($(34)-#3$) in 
		arc[start angle={atan2(\y1,\x1)}, delta angle={atan2(\y2,\x2)-atan2(\y1,\x1)-360*(atan2(\y2,\x2)-atan2(\y1,\x1)>0)}, x radius=\qoffs, y radius=\qoffs] --
		(43) let \p1=($(43)-#4$), \p2=($(41)-#4$) in 
		arc[start angle={atan2(\y1,\x1)}, delta angle={atan2(\y2,\x2)-atan2(\y1,\x1)-360*(atan2(\y2,\x2)-atan2(\y1,\x1)>0)}, x radius=\qoffs, y radius=\qoffs] --
		cycle}
	
	\ifx\relax#6\relax
	\def\rlwidth{1pt}
	\else
	\def\rlwidth{#6}
	\fi
	
	\ifx\relax#8\relax
	\fill \nrhedge;
	\else
	\fill[#8]\nrhedge;
	\fi
	
	\ifx\relax#7\relax
	\draw[line width=\rlwidth,rounded corners=\qoffs]\nrhedge;
	\else
	\draw[line width=\rlwidth,#7]\nrhedge;
	\fi
}

\let\phi=\varphi
\let\epsilon=\varepsilon
\let\eps=\epsilon
\let\rho=\varrho
\let\theta=\vartheta

\def\NN{{\mathds N}}

\newcommand{\bbS}{\mathbb{S}}

\newcommand{\bx}{\mathbf{x}}

\newcommand{\cF}{\mathcal{F}}

\newcommand{\ccP}{\mathscr{P}}

\newcommand{\fin}{\text{fin}}

\newcommand{\ceil}[1]{\left\lceil #1 \right\rceil}

\newtheoremstyle{note}  {4pt}  {4pt}  {\sl}  {}  {\bfseries}  {.}  {.5em}          {}
\newtheoremstyle{introthms}  {3pt}  {3pt}  {\itshape}  {}  {\bfseries}  {.}  {.5em}          {\thmnote{#3}}
\newtheoremstyle{remark}  {2pt}  {2pt}  {\rm}  {}  {\bfseries}  {.}  {.3em}          {}

\theoremstyle{plain}
\newtheorem{theorem}{Theorem}[section]
\newtheorem{lemma}[theorem]{Lemma}

\newtheorem{claim}[theorem]{Claim}
\newtheorem{obs}[theorem]{Observation}

\theoremstyle{note}
\newtheorem{dfn}[theorem]{Definition}

\theoremstyle{remark}
\newtheorem{remark}[theorem]{Remark}

\usepackage{lineno}
\newcommand*\patchAmsMathEnvironmentForLineno[1]{
	\expandafter\let\csname old#1\expandafter\endcsname\csname #1\endcsname
	\expandafter\let\csname oldend#1\expandafter\endcsname\csname end#1\endcsname
	\renewenvironment{#1}
	{\linenomath\csname old#1\endcsname}
	{\csname oldend#1\endcsname\endlinenomath}}
\newcommand*\patchBothAmsMathEnvironmentsForLineno[1]{
	\patchAmsMathEnvironmentForLineno{#1}
	\patchAmsMathEnvironmentForLineno{#1*}}
\AtBeginDocument{
	\patchBothAmsMathEnvironmentsForLineno{equation}
	\patchBothAmsMathEnvironmentsForLineno{align}
	\patchBothAmsMathEnvironmentsForLineno{flalign}
	\patchBothAmsMathEnvironmentsForLineno{alignat}
	\patchBothAmsMathEnvironmentsForLineno{gather}
	\patchBothAmsMathEnvironmentsForLineno{multline}
}

\def\ex{\text{\rm ex}}

\usepackage{scalerel}

\newsavebox\vdegbox
\savebox\vdegbox{\tikz{
		\draw[black,fill=black] (90:1) circle (.35);
		\draw[black,line width=0.10cm] (210:1) circle (.30);
		\draw[black,line width=0.10cm] (330:1) circle (.30);
		\draw[opacity=0] (0:1.2) circle (0.1);
	}}

\newsavebox\vvbox
\savebox\vvbox{\tikz{
		\draw[black,line width=0.10cm] (90:1) circle (.30);
		\draw[black,fill=black] (210:1) circle (.35);
		\draw[black,fill=black] (330:1) circle (.35);
		\draw[opacity=0] (0:1.2) circle (0.1);
	}}

\newsavebox\pdegbox
\savebox\pdegbox{\tikz{
		\draw[black,line width=0.10cm] (90:1) circle (.30);
		\draw[black,fill=black] (210:1) circle (.35);
		\draw[black,fill=black] (330:1) circle (.35);
		\draw[black,line width=0.28cm ] (210:1) -- (330:1);
		\draw[opacity=0] (0:1.2) circle (0.1);
	}}

\newsavebox\vvvbox
\savebox\vvvbox{\tikz{
		\draw[black,fill=black] (90:1) circle (.35);
		\draw[black,fill=black] (210:1) circle (.35);
		\draw[black,fill=black] (330:1) circle (.35);
		\draw[opacity=0] (0:1.2) circle (0.1);
	}}
\newcommand{\vvv}{\mathord{\scaleobj{1.2}{\scalerel*{\usebox{\vvvbox}}{x}}}}
\newcommand{\pivvv}{\pi_{\vvv}}

\newsavebox\evbox
\savebox\evbox{\tikz{
		\draw[black,fill=black] (90:1) circle (.35);
		\draw[black,fill=black] (210:1) circle (.35);
		\draw[black,fill=black] (330:1) circle (.35);
		\draw[black,line width=0.28cm ] (210:1) -- (330:1);
		\draw[opacity=0] (0:1.2) circle (0.1);
	}}
\newcommand{\ev}{\mathord{\scaleobj{1.2}{\scalerel*{\usebox{\evbox}}{x}}}}

\newsavebox\eebox
\savebox\eebox{\tikz{
		\draw[black,fill=black] (90:1) circle (.35);
		\draw[black,fill=black] (210:1) circle (.35);
		\draw[black,fill=black] (330:1) circle (.35);
		\draw[black,line width=0.28cm ] (90:1) -- (330:1);
		\draw[black,line width=0.28cm ] (90:1) -- (210:1);
		\draw[opacity=0] (0:1.2) circle (0.1);
	}}
\newcommand{\ee}{\mathord{\scaleobj{1.2}{\scalerel*{\usebox{\eebox}}{x}}}}

\newsavebox\eeebox
\savebox\eeebox{\tikz{
		\draw[black,fill=black] (90:1) circle (.35);
		\draw[black,fill=black] (210:1) circle (.35);
		\draw[black,fill=black] (330:1) circle (.35);
		\draw[black,line width=0.28cm ] (90:1) -- (330:1);
		\draw[black,line width=0.28cm ] (90:1) -- (210:1);
		\draw[black,line width=0.28cm ] (210:1) -- (330:1);
		\draw[opacity=0] (0:1.2) circle (0.1);
	}}

\makeatletter
\newcommand{\overrighharpoonup}[1]{\ThisStyle{%
		\vbox {\m@th\ialign{##\crcr
				\rightharpoonupfill \crcr
				\noalign{\kern-\p@\nointerlineskip}
				$\hfil\SavedStyle#1\hfil$\crcr}}}}

\def\rightharpoonupfill{%
	$\SavedStyle\m@th\mkern+0.8mu\cleaders\hbox{$\shortbar\mkern-4mu$}\hfill\rightharpoonuptip\mkern+0.8mu$}

\def\rightharpoonuptip{%
	\raisebox{\z@}[2pt][1pt]{\scalebox{0.55}{$\SavedStyle\rightharpoonup$}}}

\def\shortbar{%
	\smash{\scalebox{0.55}{$\SavedStyle\relbar$}}}
\makeatother

\makeatletter
\newcommand{\overlefharpoonup}[1]{\ThisStyle{%
		\vbox {\m@th\ialign{##\crcr
				\leftharpoonupfill \crcr
				\noalign{\kern-\p@\nointerlineskip}
				$\hfil\SavedStyle#1\hfil$\crcr}}}}

\def\leftharpoonupfill{%
	$\SavedStyle\m@th\mkern+0.8mu\cleaders\hbox{$\shortbar\mkern-4mu$}\hfill\leftharpoonuptip\mkern+0.8mu$}

\def\leftharpoonuptip{%
	\raisebox{\z@}[2pt][1pt]{\scalebox{0.55}{$\SavedStyle\leftharpoonup$}}}

\makeatletter
\newsavebox\myboxA
\newsavebox\myboxB
\newlength\mylenA

\newcommand*\xoverline[2][0.75]{%
	\sbox{\myboxA}{$\m@th#2$}%
	\setbox\myboxB\null
	\ht\myboxB=\ht\myboxA%
	\dp\myboxB=\dp\myboxA%
	\wd\myboxB=#1\wd\myboxA
	\sbox\myboxB{$\m@th\overline{\copy\myboxB}$}
	\setlength\mylenA{\the\wd\myboxA}
	\addtolength\mylenA{-\the\wd\myboxB}%
	\ifdim\wd\myboxB<\wd\myboxA%
	\rlap{\hskip 0.5\mylenA\usebox\myboxB}{\usebox\myboxA}%
	\else
	\hskip -0.5\mylenA\rlap{\usebox\myboxA}{\hskip 0.5\mylenA\usebox\myboxB}%
	\fi}
\makeatother



\DeclareSymbolFont{symbolsC}{U}{txsyc}{m}{n}
\SetSymbolFont{symbolsC}{bold}{U}{txsyc}{bx}{n}
\DeclareFontSubstitution{U}{txsyc}{m}{n}
\DeclareMathSymbol{\strictif}{\mathrel}{symbolsC}{74}

\title{Lagrangians are attained as uniform Tur\'an densities}

\author[D.~King]{Dylan King}
\address{Mathematics Department, California Institute of Technology, Pasadena, USA}
\email{dking@caltech.edu}

\author[M.~Sales]{Marcelo Sales}
\address{Mathematics Department, University of California Irvine, Irvine, USA}
\email{mtsales@uci.edu}
\thanks{The second author is supported by US Air force grant FA9550-23-1-0298.}

\author[B.~Sch\"ulke]{Bjarne Sch\"ulke}
\address{Extremal Combinatorics and Probability Group, Institute for Basic Science, Daejeon, South Korea}
\email{schuelke@ibs.re.kr}
\thanks{The third author is supported by the Young Scientist Fellowship IBS-R029-Y7.}

\thanks{The work on this article was supported by the SQuaRE ``Variants of the hypergraph Tur\'an problem'' at the American Institute of Mathematics.}

\begin{document}
\maketitle
\begin{abstract}

The study of uniform Tur\'an densities was initiated in the 1980s by Erd\H{o}s and S\'os.
Given a~$3$-graph~$F$, the uniform Tur\'an density of~$F$,~$\pi_{\vvv}(F)$, is defined as the infimum~$d\in[0,1]$ such that every~$3$-graph~$H$ in which every linearly sized~$S\subseteq V(H)$ induces at least~$(d+o(1))\binom{\vert S\vert}{3}$ edges must contain a copy of~$F$.
Disproving Erd\H{o}s's famous jumping conjecture, Frankl and R\"odl showed that the set of Tur\'an densities is not well-ordered.
We prove an analogous result for the uniform Tur\'an density, namely that the set~$\Pi^{(3)}_{\vvv,\infty}=\{\pi_{\vvv}(\cF) : \cF\text{ a family of }3\text{-graphs} \}$ is not well-ordered.
This is a consequence of a more general result, which in particular implies that for every Lagrangian~$\Lambda$ of a~$3$-graph and integer~$1 \leq t \leq 6$ we have~$\frac{t}{6}\Lambda\in \Pi^{(3)}_{\vvv,\infty}$.

\end{abstract}
\section{Introduction}\label{sec:introduction}

Tur\'an's paper~\cite{T:41} in which he shows that complete, balanced,~$(r-1)$-partite graphs maximise the number of edges among all~$K_r$-free graphs is often seen as the starting point of extremal combinatorics.
	Already in this paper, he asked for similar results for hypergraphs.
	More than $80$ years later, still little is known in this regard.
	
	A~$k$-uniform hypergraph (or~$k$-graph)~$H=(V,E)$ consists of a vertex set~$V$ and an edge set~$E\subseteq V^{(k)}=\{e\subseteq V:\vert e\vert=k\}$.
	The extremal number for~$n\in\mathds{N}$ and a~$k$-graph~$F$ is the maximum number of edges in an~$F$-free~$k$-graph on~$n$ vertices and it is denoted by~$\ex(n,F)$.
	The Tur\'an density of~$F$ is~$\pi(F)=\lim_{n\to\infty}\frac{\ex(n,F)}{\binom{n}{k}}$ (this limit was shown to exist by monotonicity in~\cite{KNS:64}).
	The aforementioned result by Tur\'an was later generalised to the Erd\H{o}s-Stone-Simonovits theorem~\cites{ES:46,ES:66}, which determines the Tur\'an density of any graph~$F$ to be~$\frac{\chi(F)-2}{\chi(F)-1}$ (where~$\chi(F)$ is the chromatic number of~$F$).
	
	For hypergraphs, only few (non-trivial) Tur\'an densities are known exactly, one prominent example being the Fano plane~\cites{DF:00,FS:05,KS:05,BR:19}.
	The Tur\'an density of the complete~$3$-graph on four vertices,~$K_4^{(3)}$, is still not known, although it is widely conjectured to be~$5/9$.
	In fact, the problem is even open for the~$3$-graph that consists of three edges on four vertices, denoted by~$K_4^{(3)-}$.
	For more on the hypergraph Tur\'an problem, we refer to the survey by Keevash~\cite{K:11}.

Given the difficulty of determining the Tur\'an density of a specific hypergraph, one can also study the possible values of the function $\pi$.
Given a (possibly infinite) family~$\mathcal{F}$ of~$k$-graphs and~$n\in\mathds{N}$,~$\ex(n,\cF)$ is the maximum number of edges a~$k$-graph on~$n$ vertices can have without containing any member of~$\cF$ as a subhypergraph.
Then~$\pi(\mathcal{F})$ is defined as before.
Now the set  
\begin{align*}
\Pi_{\infty}^{(k)} = \{\pi(\mathcal{F}) : \mathcal{F} \text{ is a family of } k\text{-graphs}\}
\end{align*}  
is the set of possible Tur\'{a}n densities (of families) of~$k$-graphs.
Note that for $k=2$, the aforementioned result by Erd\H{o}s, Stone, and Simonovits implies that~$\Pi_{\infty}^{(2)} = \left\{ \frac{j-1}{j} : j \in\mathds{N} \right\}$.

We call~$\alpha\in[0,1]$ a \emph{jump} in~$\Pi_{\infty}^{(k)}$ if there exists some~$\varepsilon>0$ such that~$\Pi_{\infty}^{(k)}\cap(\alpha,\alpha+\varepsilon)=\emptyset$, otherwise we call~$\alpha$ a non-jump (in~$\Pi_{\infty}^{(k)}$).
For $k\geq 3$, Erd\H{o}s~\cite{E:64} initiated the investigation of~$\Pi_{\infty}^{(k)}$ by showing that~$\Pi^{(k)}_{\infty}\cap(0,\frac{k!}{k^k})=\emptyset$, i.e., that~$0$ is a jump in~$\Pi_{\infty}^{(k)}$.
Erd\H{o}s's famous jumping conjecture asserted that, analogous to the graph case, the set $\Pi_{\infty}^{(k)}$ is well-ordered.
This conjecture was disproved by Frankl and R\"{o}dl~\cite{FR:84}, who showed that for every integer~$k \geq 3$, there exists a non-jump.  
Subsequently, several non-jumps have been found (see, for example,~\cite{P:07, P:08, FPRT:07, PY:21}), as well as one further jump~\cite{BT:10}.

	In this work, we consider a variant of the hypergraph Tur\'an problem that was suggested by Erd\H{o}s and S\'os~\cite{E:90,ES:82} who asked for the maximum density of~$F$-free hypergraphs~$H$ if we require~$H$ to be uniformly dense.
	More precisely, given~$d\in[0,1]$ and~$\eta>0$, a~$3$-graph hypergraph~$H=(V,E)$ is called \emph{uniformly~$(d,\eta)$-dense} if for every~$U\subseteq V$, we have~$\vert U^{(3)}\cap E\vert\geq d\binom{\vert U\vert}{3}-\eta\vert V\vert^3$.
	The uniform Tur\'an density of a~$3$-graph~$F$ is defined as
	\begin{align}\label{eqn:nonstandard_defn}
		\pivvv(F)=\sup\{d\in[0,1]:&\text{ for every }\eta>0\text{ and }n\in\mathds{N},\text{ there is an}\\ &F\text{-free, uniformly }(d,\eta)\text{-dense }3\text{-graph }H\text{ with }\vert V(H)\vert\geq n\}\,\nonumber.
	\end{align}
	
	Erd\H{o}s and S\'os specifically asked to determine~$\pivvv(K_4^{(3)})$ and~$\pivvv(K_4^{(3)-})$.
	Similarly as with the original Tur\'an density, these problems turned out to be very difficult.
	Only recently, Glebov, Kr\'al', and Volec~\cite{GKV:16} and Reiher, R\"odl, and Schacht~\cite{RRS:18} independently solved the latter, showing that~$\pivvv(K_4^{(3)-})=1/4$ and thus confirming a conjecture by Erd\H{o}s and S\'os.
	We refer to Reiher's survey~\cite{R:20} for a full description of the landscape of extremal problems in uniformly dense hypergraphs.

Similarly to the original Tur\'{a}n density, one can also define the uniform Tur\'{a}n density~$\pi_{\vvv}(\cF)$ of a family~$\cF$ of~$3$-graphs. Let  
$$\Pi^{(3)}_{\vvv,\infty}=\{\pi_{\vvv}(\mathcal{F}):\mathcal{F}\text{ is a family of }3\text{-graphs}\}$$  
be the set of all possible uniform Tur\'an densities.
The first result in this paper is that~$\Pi^{(3)}_{\vvv,\infty}$ is not well-ordered.
This can be seen as an analogous result to the aforementioned theorem by Frankl and R\"{o}dl for~$\Pi^{(3)}_{\infty}$.

    \begin{theorem}\label{thm:non-jump}
        The set~$\Pi^{(3)}_{\vvv,\infty}$ is not well-ordered.
    \end{theorem}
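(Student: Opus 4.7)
The plan is to derive Theorem~\ref{thm:non-jump} from the general principle, indicated in the abstract, that for every Lagrangian $\Lambda$ of a finite $3$-graph and every integer $1\le t\le 6$, one has $\tfrac{t}{6}\Lambda\in\Pi^{(3)}_{\vvv,\infty}$. Once this principle is established, the theorem follows by exhibiting an infinite strictly decreasing sequence of values of the form $\tfrac{t}{6}\Lambda(G)$ and noting that it sits inside $\Pi^{(3)}_{\vvv,\infty}$.

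For the combinatorial deduction, I would argue that the set $L_3:=\{\Lambda(G):G\text{ a finite }3\text{-graph}\}$ is itself not well-ordered. A convenient witness: for a base $3$-graph $G_0$ realizing some Lagrangian $\lambda_0$, one constructs $3$-graphs $H_i$ by attaching new vertices and edge-patterns to $G_0$ in such a way that $\Lambda(H_i)>\lambda_0$ for all $i$, while the ``excess'' contribution of the gadget to the Lagrangian tends to zero. Passing to a strictly decreasing subsequence and multiplying by $t/6$ then yields an infinite strictly descending chain inside $\Pi^{(3)}_{\vvv,\infty}$.

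The main work is the general principle itself. Given $G$ on $\{1,\ldots,r\}$ realizing $\Lambda=\Lambda(G)$ at a weighting $\bx=(x_1,\ldots,x_r)$, the plan is to build, for each large $n$, a host $3$-graph $H_n$ on $V=V_1\dcup\cdots\dcup V_r$ with $|V_i|\approx x_i n$. Within each blown-up edge $V_i\times V_j\times V_k$ (for $\{i,j,k\}\in E(G)$) one inserts triples of density $t/6$ via a pseudorandom Cayley-type hash: fix a large abelian group $\Gamma$, assign labels $\chi(v)\in\Gamma$, and include the triple $\{u,v,w\}$ whenever $\chi(u)+\chi(v)+\chi(w)$ lies in a prescribed union of $t$ of $6$ natural symmetric residue classes. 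Standard quasirandomness then delivers uniform $(\tfrac{t}{6}\Lambda,\eta)$-density, while the additive structure of the hash forces any prospective copy of a member of a carefully chosen companion family $\cF$ to satisfy an arithmetic identity that our choice of classes rules out, granting $\cF$-freeness of the construction.

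The main obstacle is the simultaneous calibration of two objects: the family $\cF$ (rich enough so that $\pi_{\vvv}(\cF)\le\tfrac{t}{6}\Lambda$) and the Cayley construction (avoiding all members of $\cF$, yielding the matching lower bound). The upper bound in particular likely needs a regularity-type argument in the spirit of Reiher--R\"odl--Schacht, showing that every uniformly $(\tfrac{t}{6}\Lambda+\eps,\eta)$-dense $3$-graph must contain a near-blow-up of $G$ whose blockwise edge density exceeds $t/6$, and hence a copy of some member of $\cF$ by supersaturation. Irrational optimal weights are handled by rational approximation combined with a compactness argument.
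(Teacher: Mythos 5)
Your top‑level reduction is the same as the paper's: Theorem~\ref{thm:non-jump} is deduced from the general principle (Theorem~\ref{thm:main_short}) that $\tfrac{t}{6}\Lambda^{(3)}\subset\Pi^{(3)}_{\vvv,\infty}$, combined with the existence of a strictly decreasing sequence of hypergraph Lagrangians. The paper gets that sequence very cheaply: cite Frankl--R\"odl for a non-jump in $\Pi^{(3)}_\infty$, then use the Brown--Simonovits/Pikhurko identity $\Pi^{(3)}_\infty=\overline{\Lambda^{(3)}}$ to pull the strictly decreasing sequence back into $\Lambda^{(3)}$. You instead propose to \emph{reconstruct} such a sequence by gadget surgery on a base $3$-graph $G_0$; this is plausible but it is essentially the content of Frankl--R\"odl's non-jump argument, so as written it is a hand-wave over a substantial theorem rather than a shortcut. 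If you just invoke Frankl--R\"odl and the closure identity, that part of your argument closes cleanly.

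The more serious problem is in your sketch of the general principle itself. Your lower-bound construction assigns a label $\chi(v)\in\Gamma$ to each \emph{vertex} and includes a triple according to an arithmetic condition on $\chi(u)+\chi(v)+\chi(w)$. Constructions keyed to a vertex partition or vertex labelling are exactly the ones that uniform density is designed to defeat: taking $X=Y=Z=\chi^{-1}(\gamma)$ for a fixed label $\gamma$ gives linearly sized sets in which the induced edge count can collapse, so the resulting hypergraph is not $(d,\eta,\vvv)$-dense for any $d>0$. The constructions that survive this attack (and the ones the paper uses) colour the \emph{pairs} of vertices and read the palette pattern on the three pairs of an ordered triple; no vertex subset can bias the pair-colouring, so uniform density is preserved. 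Correspondingly, the upper bound cannot be a ``near-blow-up of $G$'' argument on vertex classes -- the paper's Theorem~\ref{thm:palette_ub} works with an ordered Ramsey/regularity decomposition of a pair-colouring, and the ordering is essential. So while the high-level plan matches the paper, the engine you propose for the lower bound would not produce uniformly dense hypergraphs, and the companion upper bound would need the palette/ordering machinery rather than a vertex-blow-up supersaturation.
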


    This theorem is a consequence of our main result, which implies that much of the structure of~$\Pi^{(3)}_\infty$ transfers to~$\Pi^{(3)}_{\vvv,\infty}$. To state our main result precisely, we recall some notation around Lagrangians, which go back to the work of Motzkin and Straus~\cite{MS:65}.
    Denote the standard $(r-1)$-simplex by $$\bbS_r := \{(x_1,\dots,x_r) \in [0,1]^r \ : \ x_1+\dots+x_r = 1 \}\,.$$
    Let~$F=(V,E)$ be a~$k$-graph on~$r$ vertices (we identify~$V=[r]$).
    Then the \emph{Lagrange polynomial} of $F$ is given by $$\lambda_F(x_1,\dots,x_r):=k!\sum_{i_1\dots i_k \in E}\prod_{\ell=1}^{k}x_{i_{\ell}}\,.$$    
    The \emph{Lagrangian} of~$F$ is defined as $\Lambda_F := \max_{\bx \in \bbS_r} \lambda_F(\bx)\,,$
    and we write the set of all such Lagrangians as $\Lambda^{(k)}:=\{\Lambda_F :\: F \text{ is a } k\text{-graph} \}\,$.

    Tur\'an densities and Lagrangians are intimately connected as described by Brown and Simonovits~\cite{BS:84} and Pikhurko~\cite{P:12}:
    \begin{align}\label{eq:brown_simonovits}
        \Lambda^{(k)} \subseteq  \Pi^{(k)}_{\infty}  = \overline{\Lambda^{(k)}}\,.
    \end{align}

    The main result of this work is that the set of scaled Lagrangians~$\frac{1}{6}\Lambda^{(3)}=\{\frac{1}{6}\Lambda_F:F\text{ is a }3\text{-graph}\}$ is contained in~$\Pi^{(3)}_{\vvv,\infty}$ (in fact, any integer multiple of~$\frac{1}{6}$ at most~$1$ can be taken).
    \begin{theorem}\label{thm:main_short}
    For any integer~$1 \leq t \leq 6$ we have~$\frac{t}{6}\Lambda^{(3)} \subset \Pi_{\vvv,\infty}^{(3)}\,$
    \end{theorem}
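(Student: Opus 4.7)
The plan is to quasi-randomise the Brown--Simonovits--Pikhurko blow-up construction witnessing $\Lambda_F \in \Pi^{(3)}_{\infty}$. Fix an optimiser $\mathbf{x}\in\bbS_r$ of $\lambda_F$ and a subset $T\subseteq S_3$ with $|T|=t$. For each $n$, construct $H_n$ on $n$ vertices as follows: partition $V=V_1\cup\ldots\cup V_r$ with $|V_i|\approx x_i n$, sample a uniformly random linear order $\prec$ on $V$, and for a triple $\{u,v,w\}$ with $u\in V_{i_1}$, $v\in V_{i_2}$, $w\in V_{i_3}$ and $i_1<i_2<i_3$, declare it an edge if and only if $\{i_1,i_2,i_3\}\in E(F)$ and the permutation $\sigma\in S_3$ identifying the $\prec$-order of $(u,v,w)$ with the class-order lies in $T$. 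Recursing inside each $V_i$ propagates the structure to all linear scales. Define $\cF$ to be the family of all 3-graphs that fail to embed into any such $\prec$-decorated iterated blow-up of $F$; then $H_n$ is $\cF$-free by construction.

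\textbf{Lower bound.} A concentration argument over $\prec$, combined with the fact that the iterated blow-up of a Lagrangian optimiser satisfies $\lambda_F(\mathbf{x}_U)\geq \Lambda_F - o(1)$ on every linear-sized $U\subseteq V(H_n)$ (a standard Motzkin--Straus / averaging computation exploiting the iteration), shows that each triple in $U^{(3)}$ is an edge with probability $\frac{t}{6}\lambda_F(\mathbf{x}_U) + o(1)$, where $\mathbf{x}_U$ is the weighting induced on $U$. This produces uniform $(\frac{t}{6}\Lambda_F,\eta)$-density of $H_n$ for any $\eta>0$ and $n$ sufficiently large, yielding $\pi_{\vvv}(\cF) \geq \frac{t}{6}\Lambda_F$.

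\textbf{Upper bound and main obstacle.} For the matching upper bound, I would apply the hypergraph regularity method to an arbitrary $\cF$-free, uniformly $(d,\eta)$-dense 3-graph $H$ to extract a reduced weighted structure $R$. By the choice of $\cF$, a subconfiguration of $R$ of weighted density exceeding $\frac{t}{6}\Lambda_F$ would force a labelled copy of some $G\in\cF$, a contradiction; applying Motzkin--Straus to $R$ then bounds $d$ by $\frac{t}{6}\Lambda_F$. The hardest step is the design of $\cF$ and the accompanying regularity argument: one must extend the Reiher--R\"odl--Schacht framework (originally used for $\pi_{\vvv}(K_4^{(3)-})=\frac14$) from a single forbidden hypergraph to arbitrary Lagrangians, and simultaneously control the orientation selection $T$ in the reduced structure, ensuring that any embedding of $F$ found inside $H$ is respected up to the prescribed $\frac{t}{6}$ scaling. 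Handling this interplay between iterated blow-up structure and the orientation constraint $T$ is the technical heart of the proof.
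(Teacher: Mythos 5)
Your lower bound construction is where the argument breaks down: an iterated blow-up of $F$ with a (fixed or random) vertex partition $V_1\cup\dots\cup V_r$ is \emph{not} uniformly dense, no matter how the ordering $\prec$ is chosen. Take, for concreteness, $F=K_4^{(3)}$, $\bx$ uniform, and $X=V_1\cup V_2$. Any triple inside $X$ with two vertices in $V_1$ and one in $V_2$ (or vice versa) has multi-set of part-indices $\{1,1,2\}$, which is not an edge of $F$; hence such crossing triples are never edges of $H_n$, regardless of $\prec$, $T$, or the recursion inside each part. The only edges of $H_n$ inside $X$ live inside $V_1$ or inside $V_2$, so a direct count shows the edge density of $X$ is roughly $\tfrac14\cdot\tfrac{t}{6}\Lambda_F$, far below $\tfrac{t}{6}\Lambda_F$. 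So $H_n$ fails to be $(\tfrac{t}{6}\Lambda_F,\eta,\vvv)$-dense for small $\eta$, and $\pi_{\vvv}(\cF)\geq\tfrac{t}{6}\Lambda_F$ does not follow. The underlying problem is that in a blow-up the ``colour'' of a pair $uv$ (the pair of parts containing $u$ and $v$) is \emph{determined by the vertices}, so colours of pairs sharing a vertex are correlated. Uniform density requires the opposite: pair-colours assigned i.i.d.\ from a fixed distribution, independently across pairs — exactly the palette construction of Section~\ref{sec:palettes}.

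The paper therefore proceeds quite differently. It first abstracts the combinatorial object into a \emph{palette} (ordered triples of colours) and shows (Theorem~\ref{thm:lambda_star}) that the $\vvv$-Lagrangian of any palette is a uniform Tur\'an density; your theorem is then a one-paragraph corollary, obtained by encoding each edge $abc$ of $F$ as $t$ ordered triples so that $\lambda^{\vvv}_{\ccP}=\tfrac{t}{6}\lambda_F$. The corresponding lower-bound family is simply ``all $3$-graphs that do not satisfy $\ccP$,'' and the lower bound comes from the i.i.d.\ pair-colouring, not from any blow-up. For the matching upper bound, the paper also does not invoke hypergraph regularity: since an $\cF$-free $H$ satisfies $\ccP$, one gets an $|\Phi(\ccP)|$-colouring of $\binom{V}{2}$ and applies the \emph{graph} multicolour regularity lemma (Theorem~\ref{thm:regularity}) to extract three order-compatible classes that are colour-regular with nearly identical colour densities, then uses the triangle counting lemma to bound the density by $\lambda^{\vvv}_{\ccP}$. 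The orientation factor $\tfrac t6$ appears automatically through the palette encoding; there is no separate orientation bookkeeping in the regularity step. If you want to salvage your blind attempt, the essential fix is to replace the blow-up construction by the random pair-colouring construction and to regularise the pair-colouring rather than the $3$-graph.
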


    This theorem combined with~\eqref{eq:brown_simonovits} implies that some known topological properties of~$\Pi_{\infty}^{(3)}$ carry over to~$\Pi_{\vvv,\infty}^{(3)}$. In particular, the set $\Pi^{(3)}_{\vvv,\infty}$ contains irrational values (see Section~\ref{sec:remarks}). Theorem~\ref{thm:main_short} will follow from a more general result (Theorem~\ref{thm:lambda_star}), which states that any ``Lagrangian of a palette'' is attained as the uniform Tur\'an density of some family. We remark that we also obtained similar results for other variants of the Tur\'{a}n density, namely~$\pi_{\ee}$ and $\pi_{\ev}$. 

    The remainder of this work is organised as follows. In Section~\ref{sec:prelim}, we establish the required notation for the three variants of quasirandomness and introduce some tools we need for the proof.
    In Section~\ref{sec:palettes} we define palette constructions and extend the notion of hypergraph Lagrangians to them, so that we are able to state Theorem~\ref{thm:lambda_star}. 
    In Section~\ref{sec:proof_of_intro}, we show how Theorem~\ref{thm:lambda_star} implies Theorem~\ref{thm:main_short} which in turn gives Theorem~\ref{thm:non-jump}.
    The proof of Theorem~\ref{thm:lambda_star} is the content of Section~\ref{sec:proof}.
    In Section~\ref{sec:remarks} we provide some concluding remarks.

\section{Preliminaries}\label{sec:prelim}

For a~$k$-graph~$H=(V,E)$ and~$V_1,\dots,V_k\subseteq V$, we write $E(V_1,\dots,V_k)=\{v_1\dots v_k:v_i\in V_i\text{ for all }i\in[k]\}$ and~$e(V_1,\dots,V_k)=\vert E(V_1,\dots,V_k)\vert$.
If~$H$ is a~$3$-graph and~$X,Y,Z\subseteq V$, we set
$$E_{\vvv}(X,Y,Z):= \{(x,y,z) \in X \times Y \times Z \ : \ xyz \in E(H)  \}\,.$$
If further~$P,Q\subseteq V^{(2)}$, we define
$$E_{\ev}(X,P):= \{(x,y,z) \ : \ x \in X, yz\in P, \ xyz \in E(H)  \}\,.$$
$$ K_{\ee}(P,Q):= \{(x,y,z) \ : \ xy \in P, yz\in Q \}\,.$$
$$E_{\ee}(P,Q):= \{(x,y,z)\in K_{\ee}(P,Q) \ : \ xyz \in E(H)  \}\,.$$
We denote the cardinalities of~$E_{\vvv}$,$E_{\ev}$, and~$E_{\ee}$ by~$e_{\vvv}$,$e_{\ev}$, and~$e_{\ee}$, respectively.

Using these we define what it means for a graph to be $(d,\eta,\star)$-dense.
\begin{dfn}\label{def:star_dense}
    A $3$-graph $H=(V,E)$ is $(d,\eta,\vvv)$-dense if
    $$e_{\vvv}(X,Y,Z) \geq d|X||Y||Z|-\eta |V|^3$$ holds for all~$X,Y,Z\subseteq V$.
    It is $(d, \eta, \ev)$-dense if $$e_{\ev}(X,P) \geq d|X||P|-\eta |V|^3$$
    holds for all~$X\subseteq V$ and~$P\subseteq V^{(2)}$.
    Lastly, it is $(d, \eta, \ee)$-dense if $$e_{\ee}(P,Q) \geq d|K_{\ee}(P,Q)|-\eta |V|^3$$ holds for all~$P,Q\subseteq V^{(2)}$.
\end{dfn}
Using these notions of density we can define the uniform Tur\'an density and two further quasirandom variants.
\begin{dfn}
For~$\star\in\{\vvv,\ev,\ee\}$ and a family of~$3$-graphs~$\cF$ we define
\begin{align*}
    \pi_\star(\cF):=\sup \{d \in [0,1] : &\text{ for all } \eta > 0 \text{ and } n \in \NN, \text{ there is an $\cF$-free}\\
    &\text{and $(d,\eta,\star)$-dense $3$-graph $H$ on at least $n$ vertices} \}\,.
\end{align*}
If~$\cF$ consists of a single~$3$-graph~$F$, we write~$\pi_{\star}(F)$ instead of~$\pi_{\star}(\cF)$.
Further, we set 
$$\Pi^{(3)}_{\star,\infty} := \{\pi_\star(\cF) \ : \ \cF \text{ is a family of }3\text{-graphs} \}\,.$$
\end{dfn}
\begin{remark}
At first glance we have just defined~$\pi_{\vvv}(\cF)$ in a way different than in Equation~\ref{eqn:nonstandard_defn} in Section~\ref{sec:introduction}, but the two notions are equivalent; see~\cite{R:20} for more details. For the remainder we will apply only the definition of~$(d,\eta,\star)$-density given here. 
\end{remark}

We now discuss the methods of graph regularity, which is the main tool in the proof of Theorem~\ref{thm:lambda_star}.
Given~$d\in[0,1]$ and~$\varepsilon>0$, a graph~$G=(V,E)$, and disjoint sets~$A,B\subseteq V$ we say that~$G(A,B)$ (or simply~$(A,B)$ if~$G$ is clear from the context) is~$(\varepsilon,d)$-regular if for all~$X\subseteq A$ and~$Y\subseteq B$ with~$\vert X\vert\geq\varepsilon\vert A\vert$ and~$\vert Y\vert\geq\varepsilon\vert B\vert$, we have~$\big\vert e(X,Y)-d\vert X\vert\vert Y\vert\big\vert\leq\varepsilon\vert X\vert\vert Y\vert$.
If~$(A,B)$ is~$(\varepsilon,d)$-regular for some~$d\in[0,1]$, we say that it is~$\varepsilon$-regular.
\begin{dfn}
    Let~$G=(V,E)$ be a graph, let~$V_1,\dots,V_t\subseteq V$ be disjoint, and let $\phi:E(G)\to[r]$ be an~$r$-colouring.
    For~$a\in[r]$ and~$ij\in[t]^{(2)}$, we define
    \begin{align*}
        G_a(V_i,V_j):&=\{xy : x\in V_i,\ y \in V_j,\ \phi(x,y) = a \}\,\text{ and }
    \end{align*}
    for~$a,b,c\in[r]$ and~$ijk\in[t]^{(3)}$, we define
    \begin{align*}
        G_{abc}(V_i,V_j,V_k):&= \{xyz : xy \in {G_a(V_i,V_j)},\ yz \in G_b(V_j,V_k),\ xz \in G_c(V_i,V_k) \}.
    \end{align*}
    If~$G$ is complete, we write~$G_a(V_i,V_j)=E_a(V_i,V_j)$ and~$G_{abc}(V_i,V_j,V_k)=T_{abc}(V_i,V_j,V_k)$.
    We call a pair of vertex sets $(V_i,V_j)$ $\epsilon$-colour-regular if $G_a(V_i,V_j)$ is $\epsilon$-regular for all~$a \in [r]$.
\end{dfn}
We now state a variant of the Multicolour Regularity Lemma.
It can be proved in the usual way, just now starting with the intervals as initial partition and subsequently refining it (see~\cite{RRS:18}).
\begin{theorem}[Multicolour Regularity Lemma]\label{thm:regularity}
    For $\epsilon > 0$ and $m,r \in \NN$, there exists $M=M(m,r,\epsilon)$ so that, for every~$n\geq M$, every $r$-coloured graph $G$ with vertex set $[n]$ admits a partition $[n]=V_0 \dcup V_1 \dcup \dots \dcup V_t $ with $m \leq t \leq M$ and
 \begin{enumerate}[label=(\arabic*),ref=\arabic*]
    \item\label{it:regV0} $|V_0| \leq \epsilon n$,
     \item\label{it:regVsize} $|V_1|= \dots = |V_t|$,
     \item\label{it:regreg} the pair~$(V_i,V_j)$ is~$\eps$-colour-regular for all but at most~$\epsilon \binom{t}{2}$ of the~$ij\in[t]^{(2)}$,
     \item\label{it:regint} and for each $i\in[t]$ there is some~$s_i \in [m]$ with~$V_i\subseteq I_{s_i}$, where~$[n]=I_1\dcup \dots \dcup I_m$ is a balanced partition of $[n]$ into $m$ consecutive intervals.
 \end{enumerate}
\end{theorem}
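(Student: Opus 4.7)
The plan is to run the standard Szemer\'edi index-increment argument, starting from the balanced interval partition $I_1,\dots,I_m$ and refining simultaneously for all $r$ colour classes. To this end, for any partition $\cP = \{V_1,\dots,V_t\}$ of $[n]$ (ignoring a small exceptional class), I would track the multicolour index
\[
\operatorname{ind}(\cP) = \sum_{a=1}^{r} \sum_{1\le i<j\le t} \frac{|V_i||V_j|}{n^2}\, d_a(V_i,V_j)^2,
\]
where $d_a(V_i,V_j)=|G_a(V_i,V_j)|/(|V_i||V_j|)$ is the density of colour $a$ between $V_i$ and $V_j$. Clearly $\operatorname{ind}(\cP)\le r/2$, and any refinement of $\cP$ has index at least $\operatorname{ind}(\cP)$.

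I would then define a sequence $\cP_0,\cP_1,\dots$ of partitions of $[n]$, starting with $\cP_0=\{I_1,\dots,I_m\}$, which automatically gives condition~(4). At step $s$, if $\cP_s$ has at most $\epsilon\binom{t_s}{2}$ pairs that fail to be $\epsilon$-colour-regular, I stop. Otherwise, for each non-colour-regular pair $(V_i,V_j)$ I pick a colour $a(i,j)\in[r]$ together with sets $X_{ij}\subseteq V_i$, $Y_{ij}\subseteq V_j$ witnessing failure of $\epsilon$-regularity of $G_{a(i,j)}(V_i,V_j)$, and then refine $\cP_s$ into the atoms of the Boolean algebra generated by all of these witnesses, exactly as in Szemer\'edi's original proof. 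Since every $\cP_s$ is a refinement of $\cP_0$, each new part is still contained in some $I_{s_i}$, so condition~(4) is preserved throughout.

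A defect-Cauchy--Schwarz calculation, applied separately on each bad pair via its witnessing colour $a(i,j)$ and then summed, should give an index increment of at least $\epsilon^5/2$ per step. Since $\operatorname{ind}\le r/2$, the process terminates within at most $r\epsilon^{-5}$ iterations, and the usual tower-type bound on the number of parts yields $t\le M(m,r,\epsilon)$. To achieve (1) and (2), I would finally move a bounded number of vertices from each surviving part into the exceptional class $V_0$ to equalise the sizes; since $t\le M$ is independent of $n$, taking $n\ge M$ large enough automatically ensures $|V_0|\le\epsilon n$.

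The only real subtlety compared to the single-colour proof is the multicolour bookkeeping: a bad pair is only required to witness irregularity in \emph{some} colour, so the per-pair index gain is controlled by one colour's defect rather than an average across colours, and one must check that the $\epsilon^5/2$ lower bound on the increment survives this accounting. Adapting to the interval starting partition itself is cost-free because the refinement procedure only subdivides existing parts, so condition~(4) is built in from step zero.
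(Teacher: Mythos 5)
Your proposal is correct and matches what the paper intends: the paper itself gives no proof, only the remark that the lemma ``can be proved in the usual way, just now starting with the intervals as initial partition,'' which is precisely the multicolour index-increment argument you lay out (with the interval partition as $\cP_0$ so that condition~(\ref{it:regint}) is automatic and preserved under refinement). The only nit is that $\operatorname{ind}(\cP)\le 1/2$ rather than $r/2$, since $\sum_a d_a(V_i,V_j)^2\le\sum_a d_a(V_i,V_j)\le 1$, but this only improves your iteration bound and does not affect correctness.
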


Lastly, let us mention the Counting Lemma.
\begin{lemma}[Counting Lemma]\label{lem:counting}
     Let~$H$ be a graph with vertex set~$\{v_1,\dots,v_h\}$.
     Then for every~$\delta>0$ there is an $\epsilon>0$ so that if~$G$ is a graph with vertex set~$V_1\dcup\dots\dcup V_h$ and~$G(V_i, V_j)$ is~$(\epsilon,d^{ij})$-regular for all~$ij\in[h]^{(2)}$, then the number of copies of $H$ in~$G$ with $v_i \in V_i$ for each $i \in [h]$ is $$(1 \pm \delta)\prod_{v_iv_j \in E(H)}d^{ij}\prod_{i=1}^h |V_i|\,.$$
\end{lemma}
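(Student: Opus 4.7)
The plan is to proceed by induction on $h = |V(H)|$. The base case $h = 1$ is trivial: the number of copies equals $|V_1|$.

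For the inductive step, fix the vertex $v_h$ and count copies of $H$ by first choosing its image in $V_h$ and then extending to a copy of $H - v_h$ on the remaining parts. Let $I := \{i \in [h-1] : v_iv_h \in E(H)\}$. For $v \in V_h$ and $i \in I$, set $N_i(v) := N_G(v) \cap V_i$, and call $v$ \emph{typical} if $|N_i(v)| = (1 \pm \epsilon)d^{ih}|V_i|$ for every $i \in I$. The $(\epsilon, d^{ih})$-regularity of each pair $(V_i, V_h)$ forces all but at most $h\epsilon|V_h|$ vertices of $V_h$ to be typical, since the set of $v \in V_h$ failing the required degree condition into a fixed $V_i$ has size at most $2\epsilon|V_h|$ (otherwise it would witness a violation of regularity).

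For each typical $v$, I would count copies of $H - v_h$ in the restricted $(h-1)$-partite graph with parts $V_i' := N_i(v)$ for $i \in I$ and $V_i' := V_i$ for $i \in [h-1] \setminus I$. The key ingredient is the standard \emph{slicing lemma}: sufficiently large subsets of a regular pair inherit regularity at a mildly worse parameter. This ensures that every pair $(V_i', V_j')$ is $(\epsilon_0, d^{ij})$-regular for some $\epsilon_0$ controlled by $\epsilon$ and the densities. Feeding the restricted structure into the induction hypothesis with error parameter $\delta/2$ gives, for each typical $v$, a count of $(1 \pm \delta/2)\prod_{v_iv_j \in E(H - v_h)} d^{ij}\prod_{i=1}^{h-1}|V_i'|$. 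Substituting $|V_i'| = (1 \pm \epsilon)d^{ih}|V_i|$ for $i \in I$, summing over typical $v$, and absorbing the $h\epsilon|V_h|$ atypical vertices into the $\delta$ slack, yields the desired expression $(1 \pm \delta)\prod_{v_iv_j \in E(H)} d^{ij}\prod_{i=1}^{h}|V_i|$.

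The main obstacle is the hierarchy of constants. The induction demands a regularity parameter $\epsilon_0 = \epsilon_0(\delta/2, h-1)$ that the slicing step must supply, so $\epsilon$ must be chosen small enough with respect to $\epsilon_0$, $\delta$, $h$, and the minimum density $d^{ij}$ that appears. In the usual statement of the counting lemma one implicitly restricts attention to densities bounded below by some function of $\epsilon$ (otherwise the conclusion becomes vacuous); fixing such a floor at the outset, together with a careful choice of $\epsilon \ll \epsilon_0, \delta, \min_{ij} d^{ij}, 1/h$, makes the induction close cleanly.
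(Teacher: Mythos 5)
The paper states this Counting Lemma without proof, citing it as standard background, so there is no proof in the source to compare against. Your inductive argument via the slicing lemma is the textbook proof and the strategy is sound: fix $v_h$, restrict attention to vertices typical with respect to the neighbourhoods $N_i(v)$, apply the inductive hypothesis at a tightened parameter on the sliced parts, and absorb the atypical vertices into the slack. You are also right to flag the density floor, and this is a genuine imprecision in the lemma as literally stated: with $\epsilon$ depending only on $\delta$ and $h$, the conclusion fails when some $d^{ij}=o(1)$, since the step converting the additive regularity error into the multiplicative bound $|N_i(v)|=(1\pm\epsilon)d^{ih}|V_i|$ needs $d^{ih}\gg\epsilon$, and for smaller $d^{ih}$ the slices $N_i(v)$ may be too small for inherited regularity. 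The cleanest repair is to replace the multiplicative error by the additive one, i.e.\ to conclude that the count is $\prod_{v_iv_j\in E(H)}d^{ij}\prod_i|V_i|\pm\delta\prod_i|V_i|$; this version follows from your induction with no floor at all, because if some $d^{ij}$ lies below a fixed threshold the total number of copies through that pair is already dominated by the additive slack (each such pair carries at most $(d^{ij}+\epsilon)|V_i||V_j|$ edges). It is also the form actually needed in the paper's applications in Section~5 (in Case~1 the densities $d_a^{ij}$ may be arbitrarily small, and only the additive bound goes through; there one uses $(x_a+\delta)(x_b+\delta)(x_c+\delta)\geq\delta^3\gg c(\epsilon)$ to recover the displayed estimate). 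So the proposal is correct in substance, but I would formalize the threshold issue by proving the additive form rather than by imposing a density floor by fiat.
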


    \section{Palette Lagrangians}\label{sec:palettes}
    Historically, the known lower bound constructions for~$\pi_{\vvv}(F)$ were given by hypergraphs following the colour patterns in ``palettes''.
    To make this more precise, we introduce the following notation.

    \begin{dfn}\label{dfn:palLag}
        A \emph{palette} is a finite set~$\ccP$ of ordered~$3$-subsets of~$\NN$.
        The \emph{set of colours} of~$\ccP$ is~$\Phi(\ccP)=\bigcup_{p\in\ccP}\{a:a\in p\}$.
        A  \emph{weighting} of a palette $\ccP$ is a vector~$\bx=(x_a)_{a \in \Phi(\ccP)}\in\bbS_{\vert\Phi(\ccP)\vert}$.
        Given a palette~$\ccP$ with a weighting~$\bx$, set $$\lambda^{\vvv}_\ccP(\bx):=\sum_{(a,b,c) \in \ccP}x_ax_bx_c\,.$$
\end{dfn}
\begin{dfn}
    For a palette~$\ccP$ and~$a,b\in\Phi(\ccP)$ (not necessarily distinct), the ordered degree and codegree Lagrangians at~$a$ and at~$a,b$, respectively, are defined as
    \begin{align*}
        \lambda^{a}_{\ccP}(\bx)&:=\min\Big\{ \sum_{(a,c,d) \in \ccP}x_cx_d, \sum_{(c,a,d) \in \ccP}x_cx_d, \sum_{(c,d,a) \in \ccP}x_cx_d \Big\}\, \text{ and }\\
        \lambda^{a,b}_{\ccP}(\bx)&:=\min\Big\{ \sum_{(a,b,c) \in \ccP}x_c,\sum_{(b,a,c) \in \ccP}x_c, \sum_{(a,c,b) \in \ccP}x_c,\sum_{(b,c,a) \in \ccP}x_c, \sum_{(c,a,b) \in \ccP}x_c, \sum_{(c,b,a) \in \ccP}x_c \Big\}\,.\\
    \end{align*}
    We denote the minimum over those~$a\in\Phi(\ccP)$ or~$a,b\in\Phi(\ccP)$ with~$x_a>0$ or~$x_a,x_b>0$, respectively, as
    \begin{align*}
        \lambda^{\ev}_\ccP(\bx) &:= \min_{\substack{a \in \Phi(\ccP):\\ x_a >0}}\lambda^a_\ccP(\bx)\, \text{ and }\\
        \lambda^{\ee}_\ccP(\bx) &:= \min_{\substack{a,b \in \Phi(\ccP):\\ x_ax_b >0}}\lambda^{a,b}_\ccP(\bx).
    \end{align*}
    Finally, for~$\star \in \{\vvv,\ev,\ee\}$, maximising over~$\bx\in\bbS_{\vert\Phi(\ccP)\vert}$
    \begin{equation*}
        \Lambda^\star_\ccP:= \max_{\bx \in S_{|\Phi(\ccP)|}}\lambda^\star_\ccP(\bx)
    \end{equation*}
    defines~$\Lambda^\star_\ccP$, the palette Lagrangian of~$\ccP$, which we also call the~$\star$-Lagrangian of~$\ccP$ when we wish to emphasise~$\star$.
\end{dfn}

    The set of values obtained as the~$\star$-Lagrangian of a palette is denoted as~$$\Lambda_{\star}^{\textup{pal}}:= \{\Lambda_\ccP^{\star} \ : \ \ccP \text{ is a palette} \}.$$

As mentioned above, palettes can be used to obtain lower bounds for~$\pi_{\star}$.

\begin{dfn}
    A $3$-graph $F=(V,E)$ on $n$ vertices \emph{satisfies} a palette $\ccP$ if there exists a vertex ordering $v_1,\dots,v_n$ and colouring $\phi: \binom{V}{2} \to \Phi(\ccP)$ so that for each $v_iv_jv_k \in E$ with $i<j<k$ we have $(\phi(v_iv_j),\phi(v_jv_k),\phi(v_iv_k)) \in \ccP$.
    We say that a family of $3$-graphs $\cF$ satisfies $\ccP$ if some $F \in \cF$ satisfies $\ccP$.
\end{dfn}

Given a palette~$\ccP$ and~$\eta>0$ one can use probabilistic methods to obtain large hypergraphs~$H$ which satisfy~$\ccP$ and are~$(\Lambda_{\ccP}^{\star},\eta,\star)$-dense, giving the following folklore result. 

\begin{lemma}\label{lem:palette_lb}
    If a family of~$3$-graphs~$\cF$ does not satisfy a palette~$\ccP$ then~$\pi_{\star}(\cF) \geq \Lambda^\star_\ccP$ for each~$\star \in  \{\vvv,\ev,\ee \}$.
\end{lemma}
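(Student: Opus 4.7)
My plan is to use the standard palette construction and verify it satisfies the definition of $\pi_\star$. Fix $\star\in\{\vvv,\ev,\ee\}$, choose a weighting $\bx^\star\in\bbS_{|\Phi(\ccP)|}$ achieving $\Lambda^\star_\ccP = \lambda^\star_\ccP(\bx^\star)$ (discarding any colour $a$ with $x^\star_a = 0$), and for large $N$ take $V = [N]$ with its natural order, independently setting $\phi(uv) = a$ with probability $x^\star_a$ for each pair $\{u,v\}$. Let $H$ be the $3$-graph in which $u<v<w$ is an edge exactly when $(\phi(uv),\phi(vw),\phi(uw))\in\ccP$; then $H$ is $\cF$-free deterministically, since every sub-$3$-graph of $H$ inherits the order and colouring, hence satisfies $\ccP$, while by hypothesis no $F\in\cF$ does.

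To verify the density I would first establish the monotonicity $\lambda^\vvv_\ccP(\bx)\geq \lambda^\ev_\ccP(\bx)\geq \lambda^\ee_\ccP(\bx)$ for every $\bx$, via a short averaging argument: multiplying each of the three sums defining $\lambda^a_\ccP(\bx)$ by $x_a$ and summing in $a$ recovers $3\lambda^\vvv_\ccP(\bx)$, forcing $\lambda^\vvv_\ccP(\bx)\geq \lambda^\ev_\ccP(\bx)$, and the inequality $\lambda^\ev_\ccP\geq\lambda^\ee_\ccP$ is analogous. Consequently the expected value of $e_\star$ on any fixed adversarial test data is at least $\Lambda^\star_\ccP$ times the appropriate normalising count, up to an $O(N^2)$ boundary correction.

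The $\vvv$ case then concludes immediately: an Azuma martingale exposing the pair colours one at a time (each affecting $O(N)$ edges) gives tail $\exp(-\Omega(\eta^2N^2))$, which dominates the $2^{3N}$ union bound over triples $(X,Y,Z)$. The main obstacle is the $\ev$ and $\ee$ cases, where an adversarial $P\subseteq V^{(2)}$ (respectively $P,Q$) gives $2^{\Theta(N^2)}$ configurations, matching the concentration exponent and precluding a naive union bound.

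To bypass this I would apply the Multicolour Regularity Lemma (Theorem~\ref{thm:regularity}) to $\phi$, obtaining a partition $V = V_0\dcup V_1\dcup\cdots\dcup V_t$ such that all but $\eps\binom{t}{2}$ of the pairs $(V_i,V_j)$ are $\eps$-colour-regular, and with high probability each colour density on these regular pairs lies within $\eps$ of $x^\star_a$. Decomposing an adversarial $(X,P)$ across these blocks and applying the Counting Lemma (Lemma~\ref{lem:counting}) to each regular piece, the palette structure together with $\lambda^a_\ccP(\bx^\star)\geq \Lambda^\ev_\ccP$ lower bounds the local contribution to $e_\ev(X,P)$ by $\Lambda^\ev_\ccP$ times the local count. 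Summing yields $e_\ev(X,P)\geq \Lambda^\ev_\ccP|X||P| - O(\eps N^3)$ uniformly in $(X,P)$, and the $\ee$ case is analogous using $\lambda^{a,b}_\ccP(\bx^\star)\geq \Lambda^\ee_\ccP$. Taking $\eps<\eta$ yields the required $\cF$-free, $(\Lambda^\star_\ccP,\eta,\star)$-dense hypergraph, establishing $\pi_\star(\cF)\geq \Lambda^\star_\ccP$.
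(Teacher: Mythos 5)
The paper cites this lemma as a folklore consequence of the random palette construction and gives no proof, so there is no internal argument to compare against. Your construction (randomly colour pairs by an optimal weighting, take the induced palette hypergraph), the argument for $\cF$-freeness, the monotonicity $\lambda^\vvv_\ccP(\bx)\geq\lambda^\ev_\ccP(\bx)\geq\lambda^\ee_\ccP(\bx)$, and the Azuma/union-bound treatment of the $\vvv$ case are all correct.

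There is, however, a genuine gap in the $\ev$ and $\ee$ cases, which are exactly the ones you flag as the main obstacle. Lemma~\ref{lem:counting} counts triangles with each of the three vertices in a prescribed block of an $\eps$-regular triple; it says nothing about triangles whose $yz$-pair is constrained to lie in an \emph{adversarial} subset $P_{jk}\subseteq E(V_j,V_k)$, and an adversarial $P_{jk}$ has no reason to be $\eps$-regular. So ``applying the Counting Lemma to each regular piece'' does not produce a lower bound on $e_\ev(X_i,P_{jk})$; it would only bound $e_\ev\bigl(X_i,E_a(V_j,V_k)\bigr)$ for a whole colour class. What the argument actually needs is the defect (codegree) form of regularity: for $|X_i|\gtrsim\sqrt\eps\,|V_i|$, all but $O(\sqrt\eps)\,|V_j||V_k|$ pairs $(y,z)\in V_j\times V_k$ satisfy $\bigl|\{x\in X_i:\phi(xy)=b,\ \phi(xz)=c\}\bigr|\geq (x^\star_bx^\star_c-O(\sqrt\eps))|X_i|$ simultaneously for every $b,c$; summing over $(b,a,c)\in\ccP$ and using $\lambda^a_\ccP(\bx^\star)\geq\Lambda^\ev_\ccP$ then gives a uniform codegree bound for every non-exceptional pair in $P_{jk}$, with the exceptional pairs absorbed by the $\eta n^3$ slack. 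The $\ee$ case needs a further layer of the same kind (the cherries in $K_\ee(P,Q)$ share the pair $xz$, so one must also control the distribution of $\phi(xz)$ over the cherries). In short: right construction, right identification of the bottleneck, right tool (regularity), but the step that closes the $\ev/\ee$ argument is attributed to the wrong lemma and the pointwise codegree estimate it actually rests on is missing.
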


As pointed out in (\ref{eq:brown_simonovits}), the set of Lagrangian values $\Lambda^{(3)}$ is intimately connected to the set of Tur\'{a}n densities $\Pi_{\infty}^{(3)}$.
It would be interesting to extend such a connection to uniform Tur\'{a}n densities.
Very recently, Lamaison \cite{L:24} showed that every uniform Tur\'{a}n density can be approximated by the Lagrangian of a palette, so in particular, $\Pi_{\vvv,\infty}^{(3)}\subseteq \overline{\Lambda_{\vvv}^{\textup{pal}}}$.
Our general result (which will imply Theorems~\ref{thm:non-jump} and~\ref{thm:main_short}) is that the~$\star$-Lagrangian of any palette~$\ccP$ can be obtained as~$\pi_\star(\cF)$ for some family~$\cF$.

    \begin{theorem}\label{thm:lambda_star}
    $\Lambda_\star^{\textup{pal}} \subseteq \Pi^{(3)}_{\star,\infty}$ for $\star \in \{\vvv,\ev,\ee \}\,.$
    \end{theorem}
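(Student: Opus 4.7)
The plan is to set $\cF$ to be the family $\cF_\ccP$ of all $3$-graphs that do not satisfy $\ccP$. The lower bound $\pi_\star(\cF_\ccP)\geq \Lambda^\star_\ccP$ is then immediate from Lemma~\ref{lem:palette_lb}, since by construction no member of $\cF_\ccP$ satisfies $\ccP$. The content is the matching upper bound $\pi_\star(\cF_\ccP)\leq \Lambda^\star_\ccP$.

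Fix $\eps>0$ and suppose toward a contradiction that $H$ is an $\cF_\ccP$-free $(\Lambda^\star_\ccP+\eps,\eta,\star)$-dense $3$-graph on $n$ vertices for small $\eta$ and large $n$. Because $H$ is a subhypergraph of itself, $\cF_\ccP$-freeness forces $H\notin\cF_\ccP$, so $H$ satisfies $\ccP$: there exist an ordering $\sigma$ of $V(H)$ and a colouring $\phi\colon V(H)^{(2)}\to\Phi(\ccP)$ such that $E(H)$ is contained in the set $E_\phi$ of $\sigma$-ordered triples with $\ccP$-valid colour pattern.

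I would next apply the Multicolour Regularity Lemma (Theorem~\ref{thm:regularity}) to the complete graph on $V(H)$ coloured by $\phi$, with $\sigma$ inducing the initial interval partition and $m$ large. This yields an equipartition $V_0\dcup V_1\dcup\cdots\dcup V_t$ respecting the intervals, where most pairs $(V_i,V_j)$ are colour-regular with colour-density vector $\mathbf{d}^{ij}\in\bbS_{|\Phi(\ccP)|}$. For every cluster triple $(V_i,V_j,V_k)$ with intervals $s_i<s_j<s_k$ whose three pairs are all colour-regular, the Counting Lemma~\ref{lem:counting} shows that the fraction of $(a,b,c)$-patterned ordered triples in $V_i\times V_j\times V_k$ is $(1\pm o(1))\,d^{ij}_a d^{jk}_b d^{ik}_c$. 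Combining with $E(H)\subseteq E_\phi$ and the $\star$-density hypothesis, applied to the clusters themselves (for $\star=\vvv$) or to pair-sets consisting of a single colour between two clusters (for $\star=\ev$ and $\star=\ee$), one obtains pointwise reduced-graph inequalities of the form
\[
    \sum_{(a,b,c)\in\ccP}d^{ij}_a d^{jk}_b d^{ik}_c \;\geq\; \Lambda^\vvv_\ccP+\eps-o(1)
\]
for $\star=\vvv$, and its analogues summing over partial patterns (the $\lambda^{a}_\ccP$- and $\lambda^{a,b}_\ccP$-type sums) for $\star=\ev$ and $\star=\ee$.

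The main obstacle is to convert these pointwise inequalities, which involve distributions $\mathbf{d}^{ij}$ varying pair by pair, into a violation of the definition of $\Lambda^\star_\ccP$, which is the supremum of $\lambda^\star_\ccP(\mathbf{x})$ over a \emph{single} vector $\mathbf{x}\in \bbS_{|\Phi(\ccP)|}$. The plan is a Ramsey extraction on the reduced graph: cover $\bbS_{|\Phi(\ccP)|}$ by finitely many balls of diameter $\eps/C$, colour each colour-regular cluster pair by the ball containing its $\mathbf{d}^{ij}$, and apply tripartite Ramsey to clusters lying in three distinct intervals $I_s, I_{s'}, I_{s''}$ with $s<s'<s''$. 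Provided $t$ is chosen large enough relative to the number of balls (guaranteed by Theorem~\ref{thm:regularity} with $m$ large), this produces clusters $V_i\subseteq I_s$, $V_j\subseteq I_{s'}$, $V_k\subseteq I_{s''}$ whose three pair-distributions lie in the same small ball around a common $\mathbf{x}\in \bbS_{|\Phi(\ccP)|}$. Evaluating the pointwise inequality at this triple yields $\lambda^\star_\ccP(\mathbf{x})\geq \Lambda^\star_\ccP+\eps/2$, contradicting the definition of $\Lambda^\star_\ccP$. For $\star=\ev$ and $\star=\ee$ the Ramsey extraction must additionally ensure that the colours featuring in the relevant minima have positive weight in $\mathbf{x}$; this is arranged by restricting the extraction to colours whose average density across the reduced graph is bounded away from zero, and passing to a subpalette if necessary.
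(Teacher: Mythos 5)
Your overall architecture matches the paper's: take $\cF$ to be the family of all $3$-graphs that do not satisfy $\ccP$, obtain the lower bound from Lemma~\ref{lem:palette_lb}, and establish the upper bound via regularity, colour-triangle counting, and a Ramsey extraction that finds three clusters in distinct intervals whose colour-density vectors are all close to a common $\bx\in\bbS_{|\Phi(\ccP)|}$. (The paper packages the upper bound as Theorem~\ref{thm:palette_ubgen}, first building an auxiliary $3$-graph $R$ of ``good'' cluster triples, locating a clique in it, and only then running Ramsey on the discretized density vectors; your sketch should incorporate something like that cleanup since Ramsey cannot be applied directly while a positive fraction of cluster pairs are irregular.)

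The genuine gap is your treatment of $\star\in\{\ev,\ee\}$. Your plan --- ``restrict the extraction to colours whose average density across the reduced graph is bounded away from zero, and pass to a subpalette if necessary'' --- does not close the issue. The $\bx$ produced by Ramsey is the density vector of a single cluster triple, and a colour with high average density over the whole reduced graph can still have negligible weight in that particular $\bx$. Passing to a subpalette is also problematic: removing colours shrinks each $\lambda^a_\ccP$ but, since the minimum defining $\lambda^\ev_\ccP$ is then taken over fewer colours, $\Lambda^\ev_{\ccP'}$ may \emph{increase}, so a bound against $\Lambda^\ev_{\ccP'}$ need not yield one against $\Lambda^\ev_\ccP$. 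What is actually required is a uniform statement about \emph{every} $\bx\in\bbS_{|\Phi(\ccP)|}$: there must exist a witness colour $a$ (respectively, pair $a,b$) with $x_a\geq\beta$ (respectively, $x_a,x_b\geq\beta$) for some $\beta>0$ depending only on $\ccP$ and $\nu$, such that $\lambda^a_\ccP(\bx)\leq\Lambda^\ev_\ccP+\nu$ (respectively, $\lambda^{a,b}_\ccP(\bx)\leq\Lambda^\ee_\ccP+\nu$). This is exactly the paper's Lemma~\ref{lem:beta}, proved by a compactness/pigeonhole argument on $\bbS_{|\Phi(\ccP)|}$: if no such $\beta$ existed one could extract a limit point $\bx^*$ at which all colours of positive weight satisfy $\lambda^a_\ccP(\bx^*)\geq\Lambda^\ev_\ccP+\nu$, contradicting the definition of $\Lambda^\ev_\ccP$ as a max-min. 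Without this lemma (or an equivalent), your $\ev$ and $\ee$ cases do not go through: the witness colour for the minimum in $\lambda^\star_\ccP(\bx)$ may be too sparse for the $(\cdot,\eta,\star)$-density hypothesis to give a nontrivial bound.
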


    An immediate consequence of Theorem \ref{thm:lambda_star} is that the inclusions in (\ref{eq:brown_simonovits}) hold in one direction for $\pi_{\vvv}$, i.e., we have
    \begin{align*}
        \Lambda^{\textup{pal}}_{\vvv} \subseteq  \Pi^{(3)}_{\vvv,\infty}  \subseteq \overline{\Lambda^{\textup{pal}}_{\vvv}}\,.
    \end{align*}
    Let us remark that the main difficulty of our results lies in the fact that while palette Lagrangians (and all constructions) are defined with respect to a fixed distribution of colours on the pairs of vertices, a given uniformly~$(d,\eta,\vvv)$-dense $3$-graph that satisfies a palette~$\ccP$ does not need to follow such a fixed distribution.
    Addressing this issue is the main contribution of this paper, and it is captured in the following theorem, which states that the uniform density of any $3$-graph which satisfies a palette~$\ccP$ cannot substantially exceed~$\Lambda_{\ccP}^{\vvv}$.

    \begin{theorem}\label{thm:palette_ub}
        For every palette~$\ccP$ and~$\nu>0$ there are $C>0$ and~$N \in \NN$ such that the following holds.
        If $H$ is a $3$-graph on $n>N$ vertices which satisfies $\ccP$ and $H$ is uniformly $(d,\eta,\vvv)$-dense for some~$d\in[0,1]$ and~$\eta>0$, then $d \leq \Lambda^{\vvv}_\ccP+C\eta+\nu$.
    \end{theorem}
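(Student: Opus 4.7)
My plan is to apply the Multicolour Regularity Lemma (Theorem~\ref{thm:regularity}) to the pair colouring $\phi$ witnessing that $H$ satisfies $\ccP$, then use the Counting Lemma to bound $e(H)$ from above, and finally combine this with the uniform $(d,\eta,\vvv)$-density hypothesis applied to a carefully chosen triple of subsets $(X,Y,Z)\subseteq V(H)^3$.

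First I would fix $\epsilon>0$ small and $m\in\NN$ large (both depending only on $\ccP$ and $\nu$) and apply the regularity lemma to the complete graph on $V(H)$ edge-coloured by $\phi$ with $|\Phi(\ccP)|$ colours. This yields a partition $V_0\dcup V_1\dcup\cdots\dcup V_t$ respecting the $m$ consecutive intervals $I_1,\dots,I_m$, with most class pairs $\epsilon$-colour-regular; write $d^{ij}_a$ for the density of colour $a$ on a regular pair $(V_i,V_j)$. For each triple $(V_i,V_j,V_k)$ whose indices lie in three distinct intervals $s_i<s_j<s_k$ and whose three pairs are $\epsilon$-colour-regular, every $H$-edge $\{x,y,z\}$ with $x\in V_i,y\in V_j,z\in V_k$ has $x<y<z$ in the original ordering, so the palette condition forces $(\phi(xy),\phi(yz),\phi(xz))\in\ccP$. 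Applying Lemma~\ref{lem:counting} separately to each admissible colour triple then gives
\[
e_H(V_i,V_j,V_k)\leq\sum_{(a,b,c)\in\ccP}d^{ij}_ad^{jk}_bd^{ik}_c|V_i||V_j||V_k|+o(|V_i||V_j||V_k|).
\]
Summing over all such good class triples and absorbing the $O((\epsilon+1/m)n^3)$ contribution of bad triples (sharing an interval, involving $V_0$, or using a non-regular pair) produces $e(H)\leq\Sigma+o_\nu(n^3)$, where $\Sigma$ is the above good-triple sum.

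The main remaining task is to locate $X,Y,Z\subseteq V(H)$, appropriately ordered and with $|X||Y||Z|\geq\alpha n^3$ for some $\alpha=\alpha(\ccP,\nu)>0$, on which the number of palette triples of $\phi$ in $X\times Y\times Z$ is at most $(\Lambda^\vvv_\ccP+\nu/2)|X||Y||Z|$. Given such a triple, the palette condition bounds $e_\vvv(X,Y,Z)$ above by this palette triple count plus $o(n^3)$, while the uniform $(d,\eta,\vvv)$-density hypothesis gives $e_\vvv(X,Y,Z)\geq d|X||Y||Z|-\eta n^3$. Rearranging produces
\[
d\leq\Lambda^\vvv_\ccP+\nu/2+\eta/\alpha+o_{\nu}(1),
\]
which yields the stated inequality once $n$ is taken large and the errors are absorbed into $C=1/\alpha$ and $\nu$.

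The hard part is exhibiting this $(X,Y,Z)$. My plan is to analyse the reduced colour graph via a dichotomy. Either some interval triple $(I_{s_1},I_{s_2},I_{s_3})$ with $s_1<s_2<s_3$ already has palette triple density at most $\Lambda^\vvv_\ccP+\nu/2$, in which case we take $(X,Y,Z)=(I_{s_1},I_{s_2},I_{s_3})$ directly; or every interval triple has higher palette triple density, in which case applying the uniform density hypothesis to individual class triples (which yields pointwise lower bounds $\sum_{(a,b,c)\in\ccP}d^{ij}_ad^{jk}_bd^{ik}_c\gtrsim d$ on most good class triples) combined with the counting upper bound on $\Sigma$ will pin down the pair-density profiles $(d^{ij}_a)$ tightly enough to force them to approximately factor through a single vertex weighting $\bx\in\bbS_{|\Phi(\ccP)|}$; in that factorising case $\Sigma/n^3$ is close to $\lambda^\vvv_\ccP(\bx)\leq\Lambda^\vvv_\ccP$, so $(X,Y,Z)=(V,V,V)$ already suffices. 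Making this dichotomy quantitative—in particular, converting approximate factorisation into an explicit large subset on which the palette Lagrangian bound is witnessed—will be the crux of the proof.
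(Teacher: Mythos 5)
Your first two paragraphs (apply multicolour regularity to $\phi$, use the counting lemma to estimate palette-triple counts, and apply the $(d,\eta,\vvv)$-density hypothesis to a well-chosen triple of sets) track the paper's overall structure. The gap is in the third paragraph: the dichotomy you propose does not work, and in particular the second branch has no basis. You claim that if every interval triple has high palette density then the pair-density profiles $(d^{ij}_a)_a$ must ``approximately factor through a single vertex weighting.'' This is not forced. The palette density on a single triple $\sum_{(a,b,c)\in\ccP}d^{ij}_ad^{jk}_bd^{ik}_c$ can greatly exceed $\Lambda^\vvv_\ccP$ when the three profiles disagree --- for instance with $\ccP=\{(1,2,3)\}$ one has $\Lambda^\vvv_\ccP=1/27$ yet a single triple can have palette density $1$ if each of its three pairs is monochromatic in the right colour. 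So neither branch of your dichotomy is usable as stated, and the averaging/counting lower bounds you invoke give no route to the claimed factorisation; you yourself flag this as the unsolved crux.

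The paper resolves exactly this point by a much cheaper device: discretise and apply Ramsey rather than prove any global factorisation. After the regularity lemma, the paper builds an auxiliary $3$-graph $R$ on the cluster indices whose edges are triples that (i) lie in three distinct intervals, (ii) have all three pairs $\epsilon$-colour-regular, and (iii) carry few deleted edges. This $R$ is nearly complete, hence contains a $K_{t_1}^{(3)}$. Inside this clique, each pair $ij$ is assigned a rounded profile $(x^{ij}_a)_{a\in\Phi}$ with entries in $\{0,\delta,2\delta,\dots\}$; these form a colouring of $K^{(2)}_{t_1}$ with at most $\lceil 1/\delta\rceil^{|\Phi|}$ colours, and $t_1$ is chosen at least the corresponding Ramsey number, so one finds a monochromatic triangle. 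On the resulting three classes $X_1,X_2,X_3$ all three pairwise profiles are (to within $\delta$) equal to a single $\bx$ with $\sum_a x_a\le 1$, and then the counting lemma gives
\[
\frac{1}{n_1^3}\sum_{(a,b,c)\in\ccP}|T_{abc}(X_1,X_2,X_3)|
\le \sum_{(a,b,c)\in\ccP}(1+\delta)(x_a+\delta)(x_b+\delta)(x_c+\delta)
\le \lambda^{\vvv}_{\ccP}(\bx)+\tfrac{\nu}{2}\le\Lambda^{\vvv}_\ccP+\tfrac{\nu}{2}\,,
\]
which is precisely the ``good triple'' you were looking for. The crucial observation you were missing is that the Lagrangian bounds the palette-triple density only when the three pair profiles coincide, and Ramsey (on rounded profiles) guarantees such a coincidence somewhere, with no global structural claim required. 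Replacing your dichotomy with this Ramsey step would complete the argument.
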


\section{Proofs of Theorems \ref{thm:main_short} and \ref{thm:non-jump}}\label{sec:proof_of_intro}

\begin{proof}[Proof of Theorem~\ref{thm:main_short} using Theorem~\ref{thm:lambda_star}]
Suppose that~$\Lambda \in \Lambda^{(3)}$ is the Lagrangian of a~$3$-graph $F=(V,E)$ and~$1 \leq t \leq 6$ is given.
We will show that~$\frac{t}{6}\Lambda$ is contained in~$\Lambda_{\vvv}^{\text{pal}}$, in which case we will be done by appealing to Theorem~\ref{thm:lambda_star}.
Without loss of generality, we may write~$V=[r]$.
Given~$a,b,c \in [r]$,~write $p_t(a,b,c)$ as the set whose elements are the first~$t$ permutations of $(a,b,c)$, when listed lexicographically.
We set~$\ccP$ to be the palette~$$\ccP = \bigcup_{abc \in E, a<b<c}p_t(a,b,c)\,.$$
That is, the colours of $\ccP$ are the vertices of $F$ and each edge of $F$ becomes~$t$ triplets in~$\ccP$, where each of the~$t$ appearances differs only in their ordering.
Then~$\lambda^{\vvv}_\ccP(x_1,\dots,x_r) = \frac{t}{6}\lambda_F(x_1,\dots,x_r)$ so that the maxima~$\frac{1}{6}\Lambda = \Lambda_{\ccP}^{\vvv}$ agree as well and we are done.

\end{proof}

\begin{proof}[Proof of Theorem~\ref{thm:non-jump} using Theorem~\ref{thm:main_short}]
The existence of a non-jump due to Frankl and R\"odl \cite{FR:84} guarantees the existence of~$\alpha \in (2/9,1)$ and a strictly decreasing sequence~$\{\pi_n\}_{n\in\mathds{N}} \subseteq \Pi_\infty^{(3)}$ with~$\lim_{n \to \infty} \pi_n = \alpha.$
By~\eqref{eq:brown_simonovits} this implies the existence of~$\lambda \in (2/9,1)$ and a strictly decreasing sequence~$\lambda_n \in \Lambda^{(3)}$ with~$\lim_{n \to \infty} \lambda_n = \lambda$ (here every element~$\pi_n\in\Pi_{\infty}^{(3)}=\overline{\Lambda^{(3)}}$ is approximated sufficiently well by some element in~$\Lambda^{(3)}$).
Finally Theorem~\ref{thm:main_short} implies the existence of an infinite strictly decreasing sequence in $\Pi_{\vvv,\infty}^{(3)}$, whence this set is not well-ordered.
\end{proof}

\section{Proof of Theorem \ref{thm:lambda_star}}\label{sec:proof}

We are interested mainly in hypergraphs which satisfy a palette~$\ccP$ but can actually find success whenever we are close to satisfying~$\ccP$. Formally, given a~$3$-graph~$H=(V,E)$ and~$\alpha>0$, we say that~$H$ $\alpha$\emph{-almost satisfies} a palette~$\ccP$ if we can remove at most~$\alpha\vert V\vert^3$ edges from~$H$ such that the resulting~$3$-graph satisfies~$\ccP$.
The following is a more general version of Theorem~\ref{thm:palette_ub} and is the core element of this work.
\begin{theorem}\label{thm:palette_ubgen}
    Let $\nu >0$, $\star \in \{\vvv,\ev,\ee\}$, and $\ccP$ be a palette.
    Then there are $\alpha$, $C>0$ and $N \in \NN$ such that the following holds.
    If~$H$ is a~$3$-graph on~$n\geq N$ vertices that~$\alpha$-almost satisfies~$\ccP$ and~$H$ is~$(d,\eta,\star)$-dense for some~$d\in[0,1]$ and~$\eta>0$, then $d \leq \Lambda^{\star}_\ccP+C\eta+\nu$.
\end{theorem}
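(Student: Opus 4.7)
The aim is to locate, inside the given $(d,\eta,\star)$-dense $3$-graph $H$ that $\alpha$-almost satisfies $\ccP$, a triple of regular parts $V_i,V_j,V_k$ whose three inter-part colour distributions $\bx^{ij}, \bx^{jk}, \bx^{ik}$ are nearly equal. On such an ``aligned'' triple, counting and palette-satisfaction collapse to the diagonal of the palette Lagrangian and will yield $d \leq \Lambda^{\star}_{\ccP} + C\eta + \nu$. Before anything else I would reduce to $H$ exactly satisfying $\ccP$: removing the at most $\alpha n^3$ edges witnessing ``$\alpha$-almost satisfies'' yields $H' \subseteq H$ that satisfies $\ccP$ via some vertex ordering and colouring $\phi \colon V^{(2)} \to \Phi(\ccP)$, and $H'$ remains $(d,\eta+O(\alpha),\star)$-dense; so for $\alpha$ chosen small in terms of $\nu$ and the eventual constant $C$, we may assume $H$ itself satisfies $\ccP$.

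Apply the Multicolour Regularity Lemma (Theorem~\ref{thm:regularity}) to the $\Phi(\ccP)$-coloured complete graph on $V$, with parameters $\epsilon \ll \zeta \ll \nu$ and interval count $m$ to be chosen, obtaining a partition $V_0 \dcup V_1 \dcup \cdots \dcup V_t$ and, for each regular pair, colour densities $d_a^{ij}$. Collect these into distributions $\bx^{ij} := (d_a^{ij})_{a \in \Phi(\ccP)} \in \bbS_{|\Phi(\ccP)|}$, cover the simplex by a $\zeta$-net of size $R = R(\zeta,|\Phi(\ccP)|)$, and colour each pair $ij \in [t]^{(2)}$ by the index of its closest net-point. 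Restricting to one representative part per interval (and applying a standard cleanup to eliminate irregular pairs), Ramsey's theorem applied to this $R$-edge-colouring produces, for $m$ large enough in terms of $R$, a monochromatic triangle of representative parts with $s_i<s_j<s_k$; hence $\bx^{ij}, \bx^{jk}, \bx^{ik}$ all lie within $\zeta$ of a common $\bar{\bx} \in \bbS_{|\Phi(\ccP)|}$.

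On this aligned, interval-ordered triple I would perform a tailored $\star$-density test. For $\star=\vvv$ the Counting Lemma combined with palette-satisfaction gives
\[
e_{\vvv}(V_i,V_j,V_k) \leq (1+o(1))\!\!\sum_{(a,b,c)\in \ccP}\!\! d_a^{ij} d_b^{jk} d_c^{ik}\, |V_i||V_j||V_k| \leq \bigl(\lambda^{\vvv}_\ccP(\bar{\bx})+O(\zeta)\bigr)|V_i||V_j||V_k|,
\]
and comparing with the $(d,\eta,\vvv)$-density lower bound $d|V_i||V_j||V_k|-\eta n^3$ yields $d \leq \Lambda^{\vvv}_\ccP + O(\zeta) + O(\eta t^3)$. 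For $\star=\ev$, test $e_{\ev}(V_{\pi(i)}, G_a(V_{\pi(j)}, V_{\pi(k)}))$ for each colour $a$ with $\bar x_a>0$ and each of the three positions of $a$ among the three pairs of the triangle (realised by a suitable $\pi \in S_3$), then minimise to obtain $\lambda^{\ev}_\ccP(\bar{\bx}) \leq \Lambda^{\ev}_\ccP$. For $\star=\ee$, test $e_{\ee}(G_a(V_{\pi(i)}, V_{\pi(j)}), G_b(V_{\pi(j)}, V_{\pi(k)}))$ over the six position-pairs of $(a,b)$ with $\bar x_a \bar x_b >0$, again minimising to obtain $\lambda^{\ee}_\ccP(\bar{\bx}) \leq \Lambda^{\ee}_\ccP$. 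Choosing $\zeta$, $\epsilon$, and $m$ in that order (each small in terms of the previous and $\nu,\ccP$) and setting $C = O(M^3)$ for the regularity upper bound $M$ on $t$ absorbs all error terms into $C\eta+\nu$.

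\textbf{Main obstacle.} The crux is that for three arbitrary distributions $\mathbf{x},\mathbf{y},\mathbf{z} \in \bbS_{|\Phi(\ccP)|}$ the trilinear form $\sum_{(a,b,c)\in \ccP} x_a y_b z_c$ can substantially exceed the diagonal maximum $\Lambda^{\vvv}_\ccP$ (e.g.\ for $\ccP=\{(1,2,3)\}$ the diagonal is $1/27$ while the off-diagonal reaches $1$). A naive application of regularity and counting to an arbitrary regular triple therefore yields no palette-Lagrangian bound; the Ramsey-driven alignment step is precisely what collapses the three distributions into one effective $\bar{\bx}$, and tuning the Ramsey threshold, the quantization $\zeta$, the regularity parameters, and the almost-satisfies tolerance $\alpha$ in the right order is the main technical point.
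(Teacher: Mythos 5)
Your overall strategy is the same as the paper's: apply the multicolour regularity lemma, round the inter-part colour densities (your $\zeta$-net is the paper's grid in steps of $\delta$), find via Ramsey a triangle of regular parts with nearly identical colour distributions and respecting the interval order, and then run the relevant $\star$-density test on that aligned triple using the counting lemma. For $\star = \vvv$ your argument is essentially the paper's.

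There is, however, a genuine gap in the $\ev$ and $\ee$ cases. Your plan is to ``test $e_{\ev}(V_{\pi(i)}, G_a(V_{\pi(j)},V_{\pi(k)}))$ for each colour $a$ with $\bar x_a>0$ \dots\ then minimise.'' Unwinding a single such test on the aligned triple gives
\[
d \;\leq\; \frac{\eta n^{3}}{\,|V_{\pi(i)}|\cdot|G_a(V_{\pi(j)},V_{\pi(k)})|\,} \;+\; \Bigl(\lambda^{a}_{\ccP}(\bar{\bx})+O(\zeta)\Bigr)
\;\approx\; \frac{8M^{3}\eta}{d^{jk}_a} + \lambda^{a}_{\ccP}(\bar{\bx}) + O(\zeta)\,,
\]
and you would like to choose $a$ so that $\lambda^{a}_{\ccP}(\bar{\bx}) \leq \Lambda^{\ev}_{\ccP}$. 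But nothing forces such an optimal $a$ to have $\bar x_a$ (hence $d^{jk}_a$) bounded away from $0$: $\lambda^{a}_{\ccP}(\bar{\bx})$ does not involve $x_a$ at all, so the minimising colour may be arbitrarily rare. When $\bar x_a \to 0$ the first term blows up, and no fixed constant $C$ independent of $H$ absorbs it into $C\eta$. The same problem afflicts $\ee$, where the test set size is $\approx d^{ij}_a d^{jk}_b n_1^{3}$. This is precisely why the paper isolates Lemma~\ref{lem:beta}: a compactness/pigeonhole argument shows that there exists $\beta = \beta^{\star}(\ccP,\nu) > 0$ such that for \emph{every} $\bar{\bx}$ one can find a witness colour $a$ (resp.\ pair $a,b$) with $\bar x_a \geq \beta$ (resp.\ $\bar x_a, \bar x_b \geq \beta$) \emph{and} $\lambda^{a}_{\ccP}(\bar{\bx}) \leq \Lambda^{\ev}_{\ccP}+\nu$ (resp.\ $\lambda^{a,b}_{\ccP}(\bar{\bx}) \leq \Lambda^{\ee}_{\ccP}+\nu$). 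Only then can one set $C = 8M^{3}/\beta^{2}$. Your ``main obstacle'' paragraph correctly identifies the alignment issue, but omits this second, equally essential obstruction; without it the argument does not yield a uniform $C$ for $\star \in \{\ev, \ee\}$.

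A smaller point: your preliminary reduction ``remove $\alpha n^3$ edges and absorb into $\eta+O(\alpha)$'' creates a potential circularity if one wants $C\alpha \leq \nu$ with $C$ depending (through $M$ and $m$) on the same Tur\'an-threshold constraints that also involve $\alpha$. The paper sidesteps this by keeping both $H$ and the cleaned $H_0$, applying the density condition to $H$ and the palette condition to $H_0$, and adding a ``few $H\setminus H_0$ edges across the triple'' condition to the auxiliary $3$-graph $R$, so that the resulting error is an additive $\sqrt{\alpha}$ term rather than a multiplicative $C\alpha$. Your version can be made to work with a more careful constant hierarchy, but as written it is not immediately sound.
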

First we show how to apply this to obtain Theorem~\ref{thm:lambda_star}.
\begin{proof}[Proof of Theorem \ref{thm:lambda_star}]
    We have to show that every~$\star$-Lagrangian associated with a palette can be obtained as the uniform Tur\'an density of some family.
    To that end let $\ccP$ be a palette, $\Lambda^{\star}_\ccP$ the associated $\star$-Lagrangian, and define~$\cF$ as the (infinite) family of all those~$3$-graphs which do not satisfy~$\ccP$.
    Letting~$\nu>0$, Theorem~\ref{thm:palette_ubgen} yields~$\alpha$,~$C$,~$N$ and we claim that 
    \begin{equation*}
        \Lambda^{\star}_\ccP \leq \pi_{\star}(\cF) \leq \Lambda^{\star}_\ccP+3\nu\,,
    \end{equation*}
    in which case we are done since~$\nu>0$ was arbitrary.
    The first inequality is the result of Lemma \ref{lem:palette_lb}, so it only remains to check that $\pi_{\star}(\cF) \leq \Lambda^{\star}_\ccP+3\nu$.

   To this end, set~$d=\pi_{\star}(\cF)-\nu$ and~$\eta = \nu/C$, and note that by the definition of~$\pi_{\star}(\cF)$, there exists a~$(d,\eta,\star)$-dense $3$-graph~$H$ on~$n>N$ vertices which is $\cF$-free.
   In particular,~$H$ satisfies~$\ccP$.
    So $H$ satisfies the conditions required by Theorem~\ref{thm:palette_ubgen} (even with $\alpha =0$), which in turn discloses~$\pi_{\star}(\cF)-\nu=d \leq \Lambda^{\star}_\ccP+2\nu$ as needed.
\end{proof}

Now we turn to the proof of Theorem \ref{thm:palette_ubgen}, which will require a technical lemma to address the following concern.
Suppose for a moment that we are in the case that $\star = \ev$ and that~$H$ even satisfies $\ccP$ without having to remove any edges.
This equips us with an ordering of~$V(H)$ and a colouring of~$\binom{V(H)}{2}$.
Since~$H$ is~$(d,\eta,\ev)$-dense, to upper bound~$d$ we would want to find a large set of pairs~$P\subseteq \binom{V(H)}{2}$ and set of vertices~$X\subseteq V(H)$ with~$e_{\ev}(X,P)$ being small.
A natural choice is to select for $P$ all those pairs which receive colour $a$, where $a\in \Phi(\ccP)$ is chosen to witness the minimum with $\lambda^a_\ccP(\bx)=\Lambda^\star_\ccP$ when $\bx$ is optimal. Unfortunately it could be that this colour $a$ is vanishingly rare in $H$; in which case the resulting bound on $d$ will be weak since $P$ will be small. The following technical lemma will allow us to always choose a witness colour~$a$ which approximates $\Lambda^{\ev}_{\ccP}$ and is popular in the underlying colouring (and handles the analogous problem in the~$\ee$ case). Similar issues were addressed in Pikhurko's result~\cite{P:12}.

\begin{lemma}\label{lem:beta}
    Let $\ccP$ be a palette and~$\nu > 0$. Then there is a $\beta^{\ev}\in (0,1]$ so that
    \begin{equation*}
        \text{for all} \ \bx \in \bbS_{|\Phi(\ccP)|} \ \text{there exists} \ a \in \Phi(\ccP) \textup{ with } x_a \geq \beta^{\ev} \textup{ and }\lambda^a_\ccP(\bx) \leq \Lambda^{\ev}_\ccP+\nu .
    \end{equation*}
    Likewise there is a $\beta^{\ee}\in(0,1]$ so that
    \begin{equation*}
        \text{for all} \ \bx \in \bbS_{|\Phi(\ccP)|} \ \text{there exists} \ a,b \in \Phi(\ccP) \textup{ with } x_a,x_b \geq \beta^{\ee} \textup{ and }\lambda^{a,b}_\ccP(\bx) \leq \Lambda^{\ee}_\ccP+\nu.
    \end{equation*}
\end{lemma}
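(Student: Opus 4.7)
The plan is to handle both parts of the lemma by a single compactness-and-continuity argument, applied separately to the $\ev$ and $\ee$ cases. The key ingredients are that the simplex $\bbS_{|\Phi(\ccP)|}$ is compact, and that each fixed-index function $\lambda^a_{\ccP}(\bx)$ and $\lambda^{a,b}_{\ccP}(\bx)$ is continuous in $\bx$, being the minimum of finitely many polynomials in the coordinates of $\bx$.

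For the $\ev$ statement I would argue by contradiction. If no $\beta^{\ev}$ as claimed existed, then for every $k\in\NN$ there would be some $\bx^{(k)}\in\bbS_{|\Phi(\ccP)|}$ such that every $a\in\Phi(\ccP)$ with $x^{(k)}_a\geq 1/k$ satisfies $\lambda^a_{\ccP}(\bx^{(k)})>\Lambda^{\ev}_{\ccP}+\nu$. Using compactness of $\bbS_{|\Phi(\ccP)|}$, I would pass to a subsequence $\bx^{(k_j)}\to\bx^*\in\bbS_{|\Phi(\ccP)|}$.

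The crucial step is then to pass to the limit. For any $a\in\Phi(\ccP)$ with $x^*_a>0$ one has $x^{(k_j)}_a\to x^*_a$, hence eventually $x^{(k_j)}_a\geq x^*_a/2>1/k_j$. The hypothesis then yields $\lambda^a_{\ccP}(\bx^{(k_j)})>\Lambda^{\ev}_{\ccP}+\nu$, and continuity of $\lambda^a_{\ccP}$ gives $\lambda^a_{\ccP}(\bx^*)\geq\Lambda^{\ev}_{\ccP}+\nu$ in the limit. Taking the minimum over all $a$ in the support of $\bx^*$ produces $\lambda^{\ev}_{\ccP}(\bx^*)\geq\Lambda^{\ev}_{\ccP}+\nu>\Lambda^{\ev}_{\ccP}$, contradicting the definition of $\Lambda^{\ev}_{\ccP}$ as the supremum of $\lambda^{\ev}_{\ccP}$ over $\bbS_{|\Phi(\ccP)|}$.

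The $\ee$ statement is handled in the same way, replacing single colours $a$ by ordered pairs $(a,b)$ and using that $x^*_a,x^*_b>0$ forces $\min(x^{(k_j)}_a,x^{(k_j)}_b)>1/k_j$ eventually, together with continuity of each $\lambda^{a,b}_{\ccP}$. The only real subtlety is that $\lambda^{\ev}_{\ccP}$ and $\lambda^{\ee}_{\ccP}$ themselves are merely upper-semicontinuous, since their minimising index set can shrink as one crosses from the interior to the boundary of the simplex; however, this does not obstruct the plan, because the chain of inequalities is run through the fixed-index continuous functions $\lambda^a_{\ccP}$ and $\lambda^{a,b}_{\ccP}$ rather than through $\lambda^{\ev}_{\ccP}$ or $\lambda^{\ee}_{\ccP}$ directly. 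Such a compactness argument is qualitative and yields existence of some $\beta^{\star}>0$ without an explicit rate, in the spirit of Pikhurko's analogous manoeuvre in~\cite{P:12}.
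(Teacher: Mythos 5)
Your argument is correct and uses the same compactness-and-continuity contradiction strategy as the paper; the only difference is that you pass directly to a convergent subsequence and argue pointwise over the support of $\bx^*$, whereas the paper first applies pigeonhole over the $2^{|\Phi|}-2$ nonempty proper subsets of $\Phi(\ccP)$ to stabilise the set of ``sparse'' colours $A_n$ before extracting the limit. Your version sidesteps that bookkeeping step and is a slightly cleaner rendering of the same idea, and your closing remark about only ever invoking continuity of the fixed-index functions $\lambda^a_\ccP$ and $\lambda^{a,b}_\ccP$ is exactly the right care to take.
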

\begin{proof}
    Suppose first that~$\ccP$ is a palette and, for the sake of contradiction, that no~$\beta^{\ev}$ as desired exists.
    This yields a sequence~$\{\bx^n\}_{n\in\mathds{N}} \subseteq \bbS_{|\Phi(\ccP)|}$ so that for every~$a\in \Phi(\ccP)$ with~$\lambda^a_\ccP(\bx^n) \leq \Lambda^{\ev}_\ccP+\nu$, we have~$x^n_a <1/n$.
    Define this set of ``sparse'' colours as $A_n:= \{a \in \Phi(\ccP) \ : \ \lambda^a_\ccP(\bx^n) \leq \Lambda^{\ev}_\ccP+\nu \}$.
    Note that by definition of~$\Lambda^{\ev}_\ccP$,~$A_n$ is nonempty for each $n$ and that further~$A_n\neq\Phi(\ccP)$ for $n>|\Phi(\ccP)|$.
    Thus, by applying the pigeonhole principle over the $2^{|\Phi|}-2$ nonempty proper subsets of $\Phi(\ccP)$, we obtain a subsequence along which $A_n$ is constant, say~$A$, with $A \notin \{\emptyset,\Phi(\ccP)\}$.
    By the compactness of $\bbS_{|\Phi(\ccP)|}$ we find a further subsequence convergent to some $\bx^* \in \bbS_{|\Phi(\ccP)|}$.
    Note that by the definition of~$\bx^n$ and~$A$,~$x_a^*=0$ for every~$a\in A$.
    Further, the continuity of $\lambda^{a}_\ccP(\bx)$ in $\bx$ implies that for for $a \not \in A$ we have $\lambda^{a}_\ccP(\bx^*) \geq \Lambda^{\ev}_\ccP+\nu$.
    Then we find a contradiction~$\Lambda^{\ev}_\ccP \geq \lambda^{\ev}_\ccP(\bx^*) \geq \Lambda^{\ev}_\ccP + \nu$.

    The $\ee$ case is proved in a similar way; let $\ccP$ be a palette and suppose that a $\beta^{\ee}$ as desired does not exist.
    Then there is a sequence $\{\bx^n\}_{n\in\mathds{N}} \subseteq \bbS_{|\Phi(\ccP)|}$ so that all $a,b\in \Phi(\ccP)$ with $\lambda^{a,b}_\ccP(\bx^n) \leq \Lambda^{\ee}_\ccP+\nu$ satisfy $\min \{ x^n_a,x^n_b \} <1/n$.
    Let $$A_n:= \{a \in \Phi(\ccP) \ : \ x_a <1/n \text{ and }\lambda^{a,b}_\ccP(\bx^n) \leq \Lambda^{\ee}_\ccP+\nu\text{ for some }b \in \Phi \}\,.$$
    Just as before $A_n$ is nonempty for each $n$ and~$A_n\neq\Phi(\ccP)$ for $n>|\Phi(\ccP)|$.
    By appealing again to the pigeonhole principle, the compactness of~$\bbS_{|\Phi|}$, and the continuity of~$\lambda^{a,b}_\ccP(\bx)$, we obtain a set $A\subseteq \Phi(\ccP)$ with $A \notin\{\emptyset,\Phi(\ccP)\}$ and a point $\bx^* \in \bbS_{|\Phi|}$ so that
    \begin{itemize}
        \item for $a \in A$, $x^*_a = 0$,
        \item and for $a,b \not \in A$, $\lambda^{a,b}_\ccP(\bx^*) \geq \Lambda^{\ee}_\ccP+\nu$.
    \end{itemize}
    Then~$\Lambda^{\ee}_{\ccP} \geq \lambda^{\ee}_{\ccP}(\bx^*) \geq \Lambda^{\ee}_{\ccP}+\nu$ a contradiction.
\end{proof}

\begin{proof}[Proof of Theorem \ref{thm:palette_ubgen}]
    Let~$\ccP$ be a palette,~$\nu>0$, and denote~$\Phi:=\Phi(\ccP)$.
    First we establish some parameters.
    Depending on~$\star$ and~$\nu$, Lemma \ref{lem:beta} supplies~$\beta:=\beta^\star$; in the case~$\star=\vvv$ we set~$\beta=1$.
    We choose the remaining constants according to the following hierarchy.
    \begin{align}\label{eq:hierarchy}
        0<C^{-1}\ll N^{-1}\leq M^{-1}\ll m^{-1},\epsilon,\alpha\ll t_1^{-1}\ll\delta\ll\beta\ll\vert\Phi\vert^{-1},\nu\,.
    \end{align}
    In particular,~$\varepsilon$ is sufficiently small compared to~$\delta$ to apply Lemma \ref{lem:counting} on all graphs~$H$ on three vertices~$\{v_1,v_2,v_3\}$, and~$t_1\geq R(3;\ceil{1/\delta}^{|\Phi|})$, i.e., any colouring (of the edges) of~$K_{t_1}^{(2)}$ with $\ceil{1/\delta}^{|\Phi|}$ colours contains a monochromatic triangle.
    Further,~$\epsilon$ and~$\alpha$ are small enough, and~$m \in \NN$ large enough such that for all~$t \geq m$, any~$3$-graph on~$t$ vertices with at least $\binom{t}{3}-\epsilon t \binom{t}{2} - 8\sqrt{\alpha}t^3-\frac{2t^3}{m}$ edges contains a~$K_{t_1}^{(3)}$ (recall that~$\pi(K_{t_1}^{(3)})<1$).
    Moreover,~$M=M(m,|\Phi|,\epsilon)$ is chosen by applying Theorem~\ref{thm:regularity} to~$m$ and~$\varepsilon$, as well as~$\vert\Phi\vert$ here in place of~$r$ there, to yield an~$M=M(m,|\Phi|,\epsilon)$ such that the conclusion of Theorem~\ref{thm:regularity} holds.
    Further set~$C=8M^3/\beta^2$, and~$N = M$.

    Now let~$H$ be a~$3$-graph on~$n\geq N$ vertices that~$\alpha$-almost satisfies~$\ccP$ and is~$(d,\eta,\star)$-dense for some~$d\in[0,1]$ and~$\eta>0$.
    Let~$H_0\subseteq H$ be a~$3$-graph which satisfies $\ccP$, obtained from~$H$ by deleting at most $\alpha n^3$ edges. Since~$H$ is~$(d,\eta,\star)$-dense, to upper bound~$d$ we will find large sets of vertices or edges, depending on $\star \in \{\vvv,\ev,\ee\}$, with relatively few crossing~$3$-edges.
    
    Since~$H_0$ satisfies~$\ccP$, there is an ordering of~$V(H_0)=V(H)$ - w.l.o.g., we may assume that~$V(H_0)=[n]$ - and a colouring $\phi:\binom{[n]}{2} \to \Phi$ so that every $3$-edge in $H_0$ has as its ordered shadow an element of~$\ccP$.
    
    \begin{claim}
        There are three disjoint sets~$X_1,X_2,X_3\subseteq V$,~$d_a^{ij}\in[0,1]$, and~$x_a\in[0,1]$, for all~$ij\in[3]^{(2)}$ and~$a\in\Phi$ such that
        \begin{enumerate}[label=(\alph*),ref=\alph*]
            \item\label{it:cleanedsize} $n_1:=|X_1|=|X_2|=|X_3|\geq \frac{n}{2M}$,
            \item\label{it:cleanedorder} we have~$x<y$ for all~$x\in X_i$ and~$y\in X_j$ for all~$ij\in[3]^{(2)}$,
            \item\label{it:cleanedreg} the pair $(X_i,X_j)$ is $(\epsilon,d_a^{ij})$-regular for all~$ij\in[3]^{(2)}$ and $a \in \Phi$,
            \item\label{it:cleanedden} $d_a^{ij} \in [x_a,x_a+\delta)$ for all~$ij\in[3]^{(2)}$ and $a \in \Phi$,
            \item\label{it:cleanedsum} $\sum_{a\in \Phi} x_a \leq 1$, and
            \item\label{it:cleanededges} $e_{H \setminus H_0}(X_1,X_2,X_3)\leq\sqrt{\alpha}n_1^3$.
        \end{enumerate}
    \end{claim}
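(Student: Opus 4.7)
The plan is to obtain the sets $X_1,X_2,X_3$ via a three-stage reduction: apply the Multicolour Regularity Lemma to the colouring $\phi$, pass to a ``clean'' sub-collection of parts using a Tur\'an-type count on an auxiliary $3$-graph, and then extract three parts with essentially identical colour-density profiles via classical Ramsey.

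First I would feed the complete graph on $[n]$ equipped with the colouring $\phi\colon \binom{[n]}{2}\to\Phi$ into Theorem~\ref{thm:regularity} with parameters $m$, $|\Phi|$, $\epsilon$, obtaining a partition $[n]=V_0\dcup V_1\dcup\dots\dcup V_t$ with $m\leq t\leq M$, interval labels $s_i\in[m]$ (so that $V_i\subseteq I_{s_i}$), common part size $n_1\geq(1-\epsilon)n/t\geq n/(2M)$, and at most $\epsilon\binom{t}{2}$ pairs that fail to be $\epsilon$-colour-regular. This already yields the size bound in~(\ref{it:cleanedsize}) for whichever three parts we eventually select, and the interval labels will give the ordering~(\ref{it:cleanedorder}).

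Next I would introduce an auxiliary $3$-graph $\cT$ on $[t]$ with $\{i,j,k\}\in\cT$ iff (i) the interval labels $s_i,s_j,s_k$ are pairwise distinct, (ii) each of the three pairs $(V_i,V_j),(V_i,V_k),(V_j,V_k)$ is $\epsilon$-colour-regular, and (iii) $e_{H\setminus H_0}(V_i,V_j,V_k)\leq\sqrt{\alpha}n_1^3$. A standard averaging shows: the number of triples violating (i) is at most $t^3/m$, since at most $t^2/m$ pairs lie in a common interval; those violating (ii) number at most $\epsilon t\binom{t}{2}$; and those violating (iii) number at most $\alpha n^3/(\sqrt{\alpha}n_1^3)\leq 8\sqrt{\alpha}t^3$ because $n/n_1\leq 2t$ and $\sum_{\{i,j,k\}}e_{H\setminus H_0}(V_i,V_j,V_k)\leq\alpha n^3$. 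Hence $|\cT|\geq\binom{t}{3}-\epsilon t\binom{t}{2}-8\sqrt{\alpha}t^3-2t^3/m$, which by the hierarchy~\eqref{eq:hierarchy} guarantees a $K_{t_1}^{(3)}$ inside $\cT$, i.e., $t_1$ parts whose every triple lies in $\cT$.

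On these $t_1$ parts I would then define an auxiliary edge-colouring of $\binom{[t_1]}{2}$ by the discretised density profile $(\lfloor d_a^{ij}/\delta\rfloor)_{a\in\Phi}$, which uses at most $\lceil 1/\delta\rceil^{|\Phi|}$ colours. Since $t_1\geq R\bigl(3;\lceil 1/\delta\rceil^{|\Phi|}\bigr)$, Ramsey produces a monochromatic triangle: three parts whose pairwise colour-density profiles agree up to error $\delta$. Order them by their $s$-values and call them $X_1,X_2,X_3$, and set $x_a:=\delta\lfloor d_a^{X_1,X_2}/\delta\rfloor$. Property~(\ref{it:cleanedorder}) follows from distinctness of the $s$-values, (\ref{it:cleanedreg}) from inheriting $\epsilon$-colour-regularity, (\ref{it:cleanedden}) from the monochromatic profile, and (\ref{it:cleanededges}) from membership in $\cT$. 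The identity $\sum_{a\in\Phi} d_a^{X_1,X_2}=1$ (since $\phi$ partitions $\binom{V}{2}$) combined with $x_a\leq d_a^{X_1,X_2}$ gives~(\ref{it:cleanedsum}).

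I do not anticipate a genuine conceptual obstacle: each ingredient (regularity, the Tur\'an bound, and Ramsey) is classical, and the structural conditions (i)--(iii) in the definition of $\cT$ are designed to isolate exactly the features demanded by the claim. The only delicate point is bookkeeping, namely that the hierarchy~\eqref{eq:hierarchy} is set up precisely so that the three error terms $\epsilon t\binom{t}{2}$, $8\sqrt{\alpha}t^3$, $2t^3/m$ remain below the Tur\'an threshold for $K_{t_1}^{(3)}$, and so that $t_1$ is large enough to accommodate the downstream Ramsey step.
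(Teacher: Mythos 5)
Your proposal is correct and follows essentially the same route as the paper's own proof: apply the Multicolour Regularity Lemma to the complete graph coloured by $\phi$, form the auxiliary $3$-graph on $[t]$ with the same three ``good triple'' conditions (distinct interval labels, $\epsilon$-colour-regularity, few removed edges), invoke a Tur\'an-type bound to find a $K_{t_1}^{(3)}$, and finally apply Ramsey to the $\lceil 1/\delta\rceil^{|\Phi|}$-colouring by discretised density profiles to extract $X_1,X_2,X_3$. The only cosmetic difference is your intermediate statement that ``at most $t^2/m$ pairs lie in a common interval'' (it should be about $2t^2/m$), but you record the correct final bound $2t^3/m$, matching the paper.
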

    \begin{proof}
    The definitions of~$M$ and~$N$ ensure that the conclusion of Theorem~\ref{thm:regularity} applies to~$H_0$ with the colouring~$\phi$ which yields a nonnegative integer~$t$ with $m \leq t \leq M$ and a partition~$[n]=V_0\dcup\dots\dcup V_t$ with properties~\eqref{it:regV0}-\eqref{it:regint}.
    Set~$n_1=\vert V_1\vert$.
    Since we will choose the sets~$X_i$ among the sets~$V_j$, with~$j\in[t]$, properties~\eqref{it:regV0} and~\eqref{it:regVsize} imply~\eqref{it:cleanedsize}.
    
    Next, consider the following auxiliary~$3$-graph $R$ with vertex set~$[t]$, where for~$i,j,k\in[t]$ the set~$ijk$ forms an edge if
    \begin{enumerate}[label=(\roman*),ref=\roman*]
        \item\label{it:R_reg} all three pairs $(V_i,V_j)$, $(V_j,V_k)$, and $(V_i,V_k)$ are $\epsilon$-colour-regular,
        \item\label{it:R_cleaned} $e_{H \setminus H_0}(V_i,V_j,V_k)\leq\sqrt{\alpha}n_1^3$,
        \item\label{it:R_order} and~$V_i\subseteq I_{s_i}$,~$V_j\subseteq I_{s_j}$, and $V_k\subseteq I_{s_k}$ for distinct~$s_i,s_j,s_k \in [m]$.
    \end{enumerate}
    Our goal is to show that $R$ is nearly complete and therefore will contain a large clique which in turn will admit an application of Ramsey's theorem with an appropriate colouring that will result in~$X_1$,~$X_2$, and~$X_3$ as desired.
    
    Due to property~\eqref{it:regreg}, condition~\eqref{it:R_reg} eliminates at most~$t \epsilon \binom{t}{2}$ potential $3$-edges from~$R$.
    To see how many triples fail condition~\eqref{it:R_cleaned}, set
    \begin{equation*}
        s = \Big\vert\Big\{ijk \in [t]^{(3)} \ : \ e_{H \setminus H_0}(V_i,V_j,V_k)>\sqrt{\alpha}n_1^3\Big\}\Big\vert\,.
    \end{equation*}
    Since~$H\setminus H_0$ contains at most~$\alpha n^3$ edges, we infer
    \begin{equation*}
         s\sqrt{\alpha} n_1^3 \leq \alpha n^3\,,
    \end{equation*}
    whence
    \begin{equation*}
        s \leq \sqrt{\alpha} \Big(\frac{n}{n_1}\Big)^3 \leq 8\sqrt{\alpha}t^3\,,
    \end{equation*}
    so condition~\eqref{it:R_cleaned} eliminates at most $8\sqrt{\alpha}t^3$ potential $3$-edges from $R$.
    
    Finally, note that due to~\eqref{it:regint} (and~\eqref{it:regVsize}), for any~$i\in[t]$, there are at most~$\frac{n}{mn_1}$ indices~$j\in[t]$ such that neither~$V_i<V_j$ nor~$V_j<V_i$.
    Thus, condition~\eqref{it:R_order} eliminates at most
    \begin{equation*}
        t^2\frac{n}{mn_1}\leq\frac{2t^3}{m}
    \end{equation*}
    potential edges from $R$.
    Therefore, $R$ contains at least $\binom{t}{3}-\epsilon t \binom{t}{2} - 8\sqrt{\alpha}t^3-\frac{2t^3}{m}$ edges and by our parameter choice we obtain a clique of size~$t_1$ in~$R$, w.l.o.g. with vertex set~$[t_1]$.
    Since we will select~$X_1$,~$X_2$, and~$X_3$ from amongst the~$V_i$ for~$i \in [t_1]$, the clique in $R$ will ensure they satisfy~\eqref{it:cleanedreg},~\eqref{it:cleanedorder}, and~\eqref{it:cleanededges}.

    Each pair $(V_i,V_j)$ with~$ij\in[t_1]^{(2)}$ is $\epsilon$-colour-regular with densities $d_a^{ij}$, where~$a\in\Phi$, such that $1=\sum_{a\in \Phi}d_a^{ij}$.
    For each~$ij\in[t_1]^{(2)}$ and $a \in \Phi$ we choose $x^{ij}_a \in \{0,\delta,2\delta,\dots,\delta(\ceil{1/\delta}-1)\}$ so that
    \begin{equation*}
           x^{ij}_a \leq d^{ij}_a < x^{ij}_a + \delta.
    \end{equation*}
    The vectors~$(x^{ij}_a)_{a \in \Phi}$ comprise a $\ceil{1/\delta}^{|\Phi|}$-colouring of~$K^{(2)}_{t_1}$.
    Hence, our choice of~$t_1$ guarantees a monochromatic triangle under this colouring. Let~$(x_a)_{a \in \Phi}$ be the colour of this triangle and w.l.o.g. say it is on vertex set~$[3]$.
    Setting~$X_1=V_1$,~$X_2=V_2$, and~$X_3=V_3$, this implies that the~$x_a$ satisfy~\eqref{it:cleanedden}.
    Combining ~$1=\sum_{a\in \Phi}d_a^{ij}$ together with~$x_a\leq d^{ij}_a$ implies~\eqref{it:cleanedsum}.
    Therefore, the~$X_1$,~$X_2$,~$X_3$,~$d_a^{ij}$, and~$x_a$ are as desired.
    \end{proof}

    Now let us finish the proof separately for each choice of~$\star$.
    
    \smallskip	
    {\it \hskip1em Case 1: $\star = \vvv$.}	
    \smallskip
    
    Appealing to Definition \ref{def:star_dense} for~$H$ on~$X_1,X_2,X_3$ yields
    \begin{align*}
        d & \leq \frac{\eta n^3+e_{\vvv}(X_1,X_2,X_3)}{n_1^3}\nonumber\,.
    \end{align*}

    Using~\eqref{it:cleanedsize} and~\eqref{it:cleanededges} (and that~$X_1$,~$X_2$, and~$X_3$ are mutually disjoint), this gives
    \begin{align}\label{eq:initial}
        d\leq \eta 8M^3+\sqrt{\alpha}+e_{H_0}(X_1,X_2,X_3)/n_1^3\,.
    \end{align}

    We are concerned now (as well as in the subsequent cases) with only~$3$-edges spanning~$X_1,X_2,X_3$ and will write~$T_{abc}$ for~$T_{abc}(X_1,X_2,X_3)$, suppressing the argument.
    Since~$H_0$ satisfies~$\ccP$ and because of~\eqref{it:cleanedorder}, we know that~$e_{H_0}(X_1,X_2,X_3)\leq\sum_{(a,b,c) \in \ccP}|T_{abc}|$.

    Next, apply the triangle counting lemma (Lemma~\ref{lem:counting}), which is possible due to~\eqref{it:cleanedreg}.
    Recalling the bounds in~\eqref{it:cleanedden}, this entails
    \begin{align}\label{eq:prelim}
        \frac{1}{n_1^3}\sum_{(a,b,c) \in \ccP}|T_{abc}|&\leq\sum_{(a,b,c) \in \ccP} (1 + \delta)(x_a+\delta)(x_b+\delta)(x_c+\delta)\nonumber \\
        &\leq\lambda^{\vvv}_\ccP(\bx)+\frac{\nu}{2} \,,
    \end{align}
    where the second inequality follows from Definition~\ref{dfn:palLag} and~\eqref{it:cleanedsum} (and the choice of the constants).
    Combining~\eqref{eq:initial} and~\eqref{eq:prelim} and recalling the choice of the constants, we finally obtain~$d\leq\Lambda^{\vvv}_\ccP+C\eta+\nu$ as desired.

    \smallskip	
    {\it \hskip1em Case 2:~$\star = \ev$.}	
    \smallskip

    Recall that we have~$\beta:=\beta^{\ev}$ so that for~$\bx = (x_a)_{a \in \Phi}$ there is $a \in \Phi$ so that $x_a \geq \beta$ and $\lambda^a_\ccP(\bx) \leq \Lambda^{\ev}_\ccP+\nu$. We may, w.l.o.g., assume that the minimum in $\lambda^a_\ccP(\bx)$ is obtained when $a$ is in the $(X_1,X_2)$ position of the palette.
    Now analogous reasoning as in the first case, appealing to Definition~\ref{def:star_dense} with vertices~$X_3$ and pairs~$E_a^{12}$, gives
    \begin{align}
        d & \leq \frac{\eta n^3+e_{\ev}(X_3,E_a^{12})}{d_a^{12}n_1^3} \leq \eta 8M^3/\beta+\sqrt{\alpha}/\beta+\frac{1}{d_a^{12}n_1^3}\sum_{b,c\in \Phi \text{ s.t. }(a,b,c) \in \ccP}|T_{abc}|\,,\nonumber
    \end{align}
    and the Lagrangian~$\lambda^{a}_{\ccP}(\bx)$ bounds third term above as 
        \begin{align}
        \frac{1}{d_a^{12}n_1^3}\sum_{b,c\in \Phi \text{ s.t. }(a,b,c) \in \ccP}|T_{abc}| & \leq \lambda^{\ev}_{\ccP}(\bx)+ \frac{\nu}{2} \nonumber
    \end{align}
    in which case we are done.

    \smallskip	
    {\it \hskip1em Case 3: $\star = \ee$.}	
    \smallskip
    
     As in Case 2 we have~$\beta:=\beta^{\ev}$ so that for~$\bx = (x_a)_{a \in \Phi}$, there is a pair of colours~$a,b \in \Phi$ with~$x_a,x_b \geq \beta$ and $\lambda^{a,b}_\ccP(\bx) \leq \Lambda^{\ee}_\ccP+\nu$.
     We may assume that the minimum in~$\lambda^{a,b}_\ccP(\bx)$ is obtained when $a$ and $b$ are in the $(X_1,X_2)$ and $(X_2,X_3)$ positions of the palette, respectively.
     Appealing to Definition~\ref{def:star_dense} on the pairs~$E_a^{12}$ and~$E_b^{23}$ yields
    \begin{align}
        d & \leq \frac{\eta n^3+e_{\ee}(E_a^{12},E_b^{23})}{|K_{\ee}(E_a^{12},E_b^{23})|}\nonumber
    \end{align}
    and although we cannot determine the denominator exactly, by applying the counting Lemma \ref{lem:counting} we have~$$|K_{\ee}(E_a^{12},E_b^{23})| \geq (1 - \delta)(d_a^{12}d_b^{23})n_1^3\,.$$
    Then~$$d \leq \eta 16M^3/\beta^2+2\sqrt{\alpha}/\beta^2+\frac{1}{(1 - \delta)(d_a^{12}d_b^{23})n_1^3}\sum_{c\in \Phi \text{ s.t. }(a,b,c) \in \ccP}|T_{abc}|$$
    and the third term is bounded by the Lagrangian~$\lambda^{a,b}_{\ccP}(\bx)$ ~$$\frac{1}{(1 - \delta)(d_a^{12}d_b^{23})n_1^3}\sum_{c\in \Phi \text{ s.t. }(a,b,c) \in \ccP}|T_{abc}| \leq \frac{1}{1-\delta}\left(\lambda^{a,b}_\ccP(\bx)+\frac{\nu}{4}\right) \leq \Lambda^{\ee}_{\ccP}+\frac{\nu}{2}$$
    and we are done.

    \end{proof}

\section{Concluding remarks}\label{sec:remarks}

In this work we establish that the Lagrangian of any palette is attained as the uniform Tur\'an density of some family of~$3$-graphs.
Let us briefly mention some other consequences of our main result.
First note that our methods convert any non-jump in~$\Pi_{\infty}^{(3)}$ to a non-jump in~$\Pi_{\vvv,\infty}^{(3)}$ (see the proof of Theorem~\ref{thm:non-jump}).

A further application of Theorem \ref{thm:main_short} is that it allows us to generate new members of~$\Pi_{\vvv,\infty}^{(3)}$ by computing Lagrangians. For instance, one can calculate the following Lagrangians, each of which gives an element of~$\Pi_{\vvv,\infty}^{(3)}$.

\begin{obs}
    Let $C_\ell^{(3)}$ be the $3$-uniform tight cycle of length~$\ell$.
    Then the Lagrangians are 
    \begin{equation*}
    \frac{1}{6}\Lambda_{C_\ell^{(3)}}=
        \begin{cases}
                            1/27 \text{ for $\ell=3$} \\
                            1/16 \text{ for $\ell=4$}\\
                            1/25 \text{ for $\ell=5$}\\
                            1/27 \text{ for $\ell\geq 6$}.
        \end{cases}
    \end{equation*}
    Let $F_{3,2}$ be the hypergraph with vertices $\{1,2,3,4,5\}$ and edges $\{123,145,245,345\}$. Then $\frac{1}{6}\Lambda_{F_{3,2}} = \frac{5 \sqrt{5}+63}{1922}\approx 0.0385954.$
    Hence, all these numbers are contained in~$\Pi_{\vvv,\infty}^{(3)}$.
\end{obs}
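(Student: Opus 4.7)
The plan is to compute each Lagrangian as a polynomial optimisation on the simplex and then invoke Theorem~\ref{thm:main_short} with $t=1$ to deposit the value in $\Pi_{\vvv,\infty}^{(3)}$. Recall that $\frac{1}{6}\Lambda_F = \max_{\bx \in \bbS_r} \sum_{e \in E(F)} \prod_{i \in e} x_i$, so each case reduces to a finite-dimensional maximisation that I would attack via Lagrange multipliers together with boundary analysis.

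For the short tight cycles the computations are essentially direct. The graph $C_3^{(3)}$ is a single edge, so AM--GM gives $\max x_1 x_2 x_3 = 1/27$ on $\bbS_3$. The graph $C_4^{(3)}$ coincides with $K_4^{(3)}$, so by symmetry the maximum is $4(1/4)^3 = 1/16$ at $x_i = 1/4$. For $C_5^{(3)}$ I would write the first-order conditions $\partial f/\partial x_i = \mu$ on any open support: using that the five ``antipodal'' non-edge pairs must agree on the weighted codegree, a short manipulation forces $x_i = 1/5$ as the unique interior critical point, giving $5(1/5)^3 = 1/25$. Boundary maxima (support of size $3$ or $4$) are bounded above by $1/27 < 1/25$, so $\frac{1}{6}\Lambda_{C_5^{(3)}} = 1/25$.

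For $C_\ell^{(3)}$ with $\ell \geq 6$, the lower bound $1/27$ is witnessed by concentrating mass $1/3$ on three consecutive vertices of the cycle. For the matching upper bound I would apply the standard Motzkin--Straus technique: at a maximum $\bx^*$ with support $S$, whenever $i, j \in S$ are not contained in a common edge we must have $L_i(\bx^*) = L_j(\bx^*)$, where $L_i := \partial f/\partial x_i$ is the weighted codegree of vertex~$i$ (otherwise shifting mass from the smaller $L$-value to the larger strictly increases $f$). Two vertices of $C_\ell^{(3)}$ share an edge iff their cyclic distance is at most~$2$, so for $\ell \geq 6$ the coedge relation is sparse enough that the system of $L$-equalities on any support of size $\geq 5$ degenerates into relations of the form $x_a x_b = 0$ on positive coordinates, which is a contradiction. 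Supports of size $3$ give $f \leq 1/27$ trivially via AM--GM; a support of size $4$ must, after applying the $L$-equalities, consist of four consecutive vertices $\{i-1, i, i+1, i+2\}$, on which $f = x_i x_{i+1}(x_{i-1} + x_{i+2})$, whose maximum over $\bbS_4$ equals $1/27$ by elementary calculus ($\max_{s \in [0,1]} s^2(1-s)/4 = 1/27$ at $s = 2/3$). The main obstacle is executing the case analysis uniformly in $\ell$, which is routine but requires careful bookkeeping with the cyclic link structure of $C_\ell^{(3)}$.

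For $F_{3,2}$, the Lagrange polynomial factors as $f(\bx) = x_1 x_2 x_3 + x_4 x_5 (x_1 + x_2 + x_3)$. Writing $s := x_1 + x_2 + x_3$, AM--GM forces $x_1 = x_2 = x_3 = s/3$ and $x_4 = x_5 = (1-s)/2$ at an optimum, reducing the problem to maximising $g(s) := s^3/27 + s(1-s)^2/4$ on $[0,1]$. Setting $g'(s) = 0$ and clearing denominators yields the quadratic $31 s^2 - 36 s + 9 = 0$, with roots $s = (18 \pm 3\sqrt{5})/31$; selecting the root in $(0, 1/2)$ (the genuine local maximiser) and substituting back gives $\frac{1}{6}\Lambda_{F_{3,2}} = \frac{5\sqrt{5} + 63}{1922}$ after simplification, noting $1922 = 2 \cdot 31^2$.
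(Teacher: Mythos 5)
The paper states this observation without proof, so there is no argument to compare against; I will instead assess your computations directly. Your overall plan—compute each $\frac{1}{6}\Lambda_F = \max_{\bx\in\bbS_r}\sum_{e\in E(F)}\prod_{i\in e}x_i$ and invoke Theorem~\ref{thm:main_short} with $t=1$—is exactly right, and the final values all check out: for $C_3^{(3)}, C_4^{(3)}$ the identifications with a single triple and with $K_4^{(3)}$ are correct; for $F_{3,2}$ the reduction to $g(s)=s^3/27+s(1-s)^2/4$, the resulting quadratic $31s^2-36s+9=0$, and the simplification at the root $s=(18-3\sqrt5)/31$ (using $g(s)=s(1-s)/6$ at a critical point) indeed give $\frac{5\sqrt5+63}{1922}$.

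Two steps are asserted but not actually argued. First, for $\ell=5$ you claim that $x_i=1/5$ is the \emph{unique} interior critical point; this is plausible (and the answer is correct) but requires justification, since nothing in the written manipulation rules out non-symmetric interior solutions—one either needs to solve the critical system explicitly or exploit, e.g., the self-complementarity of $C_5^{(3)}$ inside $K_5^{(3)}$ together with Maclaurin's inequality. Second, and more substantially, for $\ell\geq 6$ the phrase ``the system of $L$-equalities on any support of size $\geq 5$ degenerates into relations of the form $x_ax_b=0$'' is a claim in need of proof, not a proof. A clean and uniform way to close this case is the standard minimal-support Motzkin--Straus argument: choose a maximiser $\bx^*$ with support $S$ of minimum cardinality; if $i,j\in S$ lie in no common edge, then $f$ is affine in $(x_i,x_j)$ with $x_i+x_j$ fixed, so shifting all mass to the endpoint with the larger partial derivative does not decrease $f$ and strictly shrinks the support, a contradiction. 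Hence $S$ is a clique of the ``co-edge'' graph, which for $C_\ell^{(3)}$ is the squared cycle $C_\ell^2$; for $\ell\geq 6$ its maximum clique has size $3$, and on any support of size $\leq 3$ one has $f\leq x_ax_bx_c\leq 1/27$. This replaces both your size-$\geq 5$ claim and the size-$4$ analysis in one stroke (and explains why the size-$4$ supports you consider need not arise at all). With those two points tightened, the argument is complete.
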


Finally, for the original Tur\'{a}n density Pikhurko \cite{P:12} proved that for certain constructions coming from iterated blowups, there exist finite families of graphs whose extremal constructions are precisely given by those blowups. In particular, as a consequence of his result, one can obtain that $\Lambda^{(3)}\subseteq \Pi_{\fin}^{(3)}$, which strengthens the result in (\ref{eq:brown_simonovits}).
It would be interesting to prove an analogous version of the result in \cite{P:12} for uniform Tur\'{a}n densities.


We remark that an application of the strong hypergraph removal lemma due to Avart, R\"odl, and Schacht~\cite{ARS:07} together with Theorem \ref{thm:palette_ubgen} implies the weaker statement that every palette Lagrangian can be approximated by uniform Tur\'{a}n densities of finite families, i.e.,~$\Lambda^{\textup{pal}}_{\vvv}\subseteq \overline{\Pi_{\vvv,\fin}^{(3)}}$.
Thus, a similar proof as that of Theorem~\ref{thm:non-jump} gives that~$\Pi^{(3)}_{\vvv,\fin}$ is not well-ordered, and that any non-jump in~$\Pi_{\infty}^{(3)}$ can be converted into a non-jump in~$\Pi_{\vvv,\fin}^{(3)}$.

	\begin{bibdiv}
		\begin{biblist}
                
                \bib{ARS:07}{article}{
                    author = {Avart, Christian},
                    author = {R\"{o}dl, Vojt\v{e}ch},
                    author = {Schacht, Mathias},
                    title = {Every Monotone 3‐Graph Property is Testable},
                    journal = {SIAM Journal on Discrete Mathematics},
                    volume = {21},
                    number = {1},
                    pages = {73-92},
                    year = {2007},
                    doi = {10.1137/060652294},                  
                    URL = {https://doi.org/10.1137/060652294}
                    }

			\bib{BT:10}{article}{
                  title={Hypergraphs Do Jump},
                  author={ Baber, Rahil},
                  author={Talbot, John},
                  journal={Combinatorics, Probability and Computing},
                  year={2010},
                  volume={20},
                  pages={161 - 171},
                  url={https://api.semanticscholar.org/CorpusID:34389664}
                }
			
			\bib{BR:19}{article}{
				author={Bellmann, Louis},
				author={Reiher, {Chr}istian},
				title={Tur\'{a}n's theorem for the Fano plane},
				journal={Combinatorica},
				volume={39},
				date={2019},
				number={5},
				pages={961--982},
				issn={0209-9683},
				review={\MR{4039597}},
				doi={10.1007/s00493-019-3981-8},
			}

                \bib{BS:84}{article}{
                  title={Digraph extremal problems, hypergraph extremal problems, and the densities of graph structures},
                  author={Brown, W. G.},
                  author={Simonovits, Mikl{\'o}s },
                  journal={Discret. Math.},
                  year={1984},
                  volume={48},
                  pages={147-162},
                  url={https://api.semanticscholar.org/CorpusID:8001713}
                }

			\bib{DF:00}{article}{
				author={De Caen, Dominique},
				author={F\"{u}redi, Zolt\'{a}n},
				title={The maximum size of 3-uniform hypergraphs not containing a Fano
					plane},
				journal={J. Combin. Theory Ser. B},
				volume={78},
				date={2000},
				number={2},
				pages={274--276},
				issn={0095-8956},
				review={\MR{1750899}},
				doi={10.1006/jctb.1999.1938},
			}

                \bib{E:64}{article}{
                  title={On extremal problems of graphs and generalized graphs},
                  author={Erd{\"o}s, Paul},
                  journal={Israel Journal of Mathematics},
                  volume={2},
                  number={3},
                  pages={183--190},
                  year={1964},
                  publisher={Springer}
                }
			
			\bib{E:90}{article}{
				author={Erd\H{o}s, Paul},
				title={Problems and results on graphs and hypergraphs: similarities and
					differences},
				conference={
					title={Mathematics of Ramsey theory},
				},
				book={
					series={Algorithms Combin.},
					volume={5},
					publisher={Springer, Berlin},
				},
				date={1990},
				pages={12--28},
				review={\MR{1083590}},
				doi={10.1007/978-3-642-72905-8\_2},
			}
			
			\bib{ES:66}{article}{
				author={Erd\H{o}s, P.},
				author={Simonovits, M.},
				title={A limit theorem in graph theory},
				journal={Studia Sci. Math. Hungar.},
				volume={1},
				date={1966},
				pages={51--57},
				issn={0081-6906},
				review={\MR{205876}},
			}
			
			\bib{ES:82}{article}{
				author={Erd\H{o}s, P.},
				author={S\'{o}s, Vera T.},
				title={On Ramsey-Tur\'{a}n type theorems for hypergraphs},
				journal={Combinatorica},
				volume={2},
				date={1982},
				number={3},
				pages={289--295},
				issn={0209-9683},
				review={\MR{698654}},
				doi={10.1007/BF02579235},
			}
			
			\bib{ES:46}{article}{
				author={Erd\H{o}s, P.},
				author={Stone, A. H.},
				title={On the structure of linear graphs},
				journal={Bull. Amer. Math. Soc.},
				volume={52},
				date={1946},
				pages={1087--1091},
				issn={0002-9904},
				review={\MR{18807}},
				doi={10.1090/S0002-9904-1946-08715-7},
			}
                \bib{FR:84}{article}{
                  title={Hypergraphs do not jump},
                  author = {Frankl, Peter},
                  author = {Rödl, Vojtech}, 
                  journal={Combinatorica},
                  year={1984},
                  volume={4},
                  pages={149-159},
                  url={https://api.semanticscholar.org/CorpusID:45323319}
                }
                \bib{FPRT:07}{article}{
                    title = {A note on the jumping constant conjecture of Erdős},
                    journal = {Journal of Combinatorial Theory, Series B},
                    volume = {97},
                    number = {2},
                    pages = {204-216},
                    year = {2007},
                    issn = {0095-8956},
                    doi = {https://doi.org/10.1016/j.jctb.2006.05.004},
                    url = {https://www.sciencedirect.com/science/article/pii/S0095895606000670},
                    author = {Frankl, Peter},
                    author = {Peng, Yuejian},
                    author = {Rödl, Vojtech}, 
                    author = {Talbot, John},
                    keywords = {Extremal hypergraph problems}
                    }

			\bib{FS:05}{article}{
				author={F\"{u}redi, Zolt\'{a}n},
				author={Simonovits, Mikl\'{o}s},
				title={Triple systems not containing a Fano configuration},
				journal={Combin. Probab. Comput.},
				volume={14},
				date={2005},
				number={4},
				pages={467--484},
				issn={0963-5483},
				review={\MR{2160414}},
				doi={10.1017/S0963548305006784},
			}

			\bib{GKV:16}{article}{
				author={Glebov, Roman},
				author={Kr\'{a}l', Daniel},
				author={Volec, Jan},
				title={A problem of Erd\H{o}s and S\'{o}s on 3-graphs},
				journal={Israel J. Math.},
				volume={211},
				date={2016},
				number={1},
				pages={349--366},
				issn={0021-2172},
				review={\MR{3474967}},
				doi={10.1007/s11856-015-1267-4},
			}
			
			\bib{KNS:64}{article}{
				author={Katona, Gyula},
				author={Nemetz, Tibor},
				author={Simonovits, Mikl\'{o}s},
				title={On a problem of Tur\'{a}n in the theory of graphs},
				language={Hungarian, with English and Russian summaries},
				journal={Mat. Lapok},
				volume={15},
				date={1964},
				pages={228--238},
				issn={0025-519X},
				review={\MR{172263}},
			}
			
			\bib{K:11}{article}{
				author={Keevash, Peter},
				title={Hypergraph Tur\'{a}n problems},
				conference={
					title={Surveys in combinatorics 2011},
				},
				book={
					series={London Math. Soc. Lecture Note Ser.},
					volume={392},
					publisher={Cambridge Univ. Press, Cambridge},
				},
				date={2011},
				pages={83--139},
				review={\MR{2866732}},
			}
			
			\bib{KS:05}{article}{
				author={Keevash, Peter},
				author={Sudakov, Benny},
				title={The Tur\'{a}n number of the Fano plane},
				journal={Combinatorica},
				volume={25},
				date={2005},
				number={5},
				pages={561--574},
				issn={0209-9683},
				review={\MR{2176425}},
				doi={10.1007/s00493-005-0034-2},
			}
                \bib{L:24}{article}{
				author={Lamaison, Ander},
				doi = {10.48550/ARXIV.2408.09643},
				
				url = {https://https://www.arxiv.org/pdf/2408.09643},

				title = {Palettes determine uniform Turán density},
				
				publisher = {arXiv},
				
				year = {2024},
				
				copyright = {Creative Commons Attribution 4.0 International}
			}

            \bib{MS:65}{article}{
               author={Motzkin, T. S.},
               author={Straus, E. G.},
               title={Maxima for graphs and a new proof of a theorem of Tur\'an},
               journal={Canadian J. Math.},
               volume={17},
               date={1965},
               pages={533--540},
               issn={0008-414X},
               review={\MR{0175813}},
               doi={10.4153/CJM-1965-053-6},
            }
            
                \bib{P:08}{article}{
                    author = {Peng, Yuejian},
                    year = {2008},
                    month = {10},
                    pages = {307-324},
                    title = {Using Lagrangians of Hypergraphs to Find Non-Jumping Numbers (I)},
                    volume = {12},
                    journal = {Annals of Combinatorics - ANN COMB},
                    doi = {10.1007/s00026-008-0353-2}
                    }
                \bib{P:07}{article}{
                    title = {Using Lagrangians of hypergraphs to find non-jumping numbers(II)},
                    journal = {Discrete Mathematics},
                    volume = {307},
                    number = {14},
                    pages = {1754-1766},
                    year = {2007},
                    issn = {0012-365X},
                    doi = {https://doi.org/10.1016/j.disc.2006.09.024},
                    url = {https://www.sciencedirect.com/science/article/pii/S0012365X0600690X},
                    author = {Peng, Yuejian}
                    }
                \bib{PY:21}{article}{
                    author = {Yan, Zilong},
                    author = {Peng, Yuejian},
                    year = {2021},
                    month = {12},
                    pages = {},
                    title = {Non-jumping Tur\'an densities of hypergraphs},
                    doi = {10.48550/arXiv.2112.14943}
                    }
               \bib{P:12}{article}{
                    author = {Pikhurko, Oleg},
                    year = {2012},
                    month = {04},
                    pages = {},
                    title = {On Possible Turan Densities},
                    volume = {201},
                    journal = {Israel Journal of Mathematics},
                    doi = {10.1007/s11856-014-0031-5}
                    }
			
			\bib{R:20}{article}{
				author={Reiher, {Chr}istian},
				title={Extremal problems in uniformly dense hypergraphs},
				journal={European J. Combin.},
				volume={88},
				date={2020},
				pages={103117, 22},
				issn={0195-6698},
				review={\MR{4111729}},
				doi={10.1016/j.ejc.2020.103117},
			}
			
			\bib{RRS:18}{article}{
				author={Reiher, {Chr}istian},
				author={R\"{o}dl, Vojt\v{e}ch},
				author={Schacht, Mathias},
				title={On a Tur\'{a}n problem in weakly quasirandom 3-uniform hypergraphs},
				journal={J. Eur. Math. Soc. (JEMS)},
				volume={20},
				date={2018},
				number={5},
				pages={1139--1159},
				issn={1435-9855},
				review={\MR{3790065}},
				doi={10.4171/JEMS/784},
			}

			\bib{T:41}{article}{
				author={Tur\'{a}n, Paul},
				title={Eine Extremalaufgabe aus der Graphentheorie},
				language={Hungarian, with German summary},
				journal={Mat. Fiz. Lapok},
				volume={48},
				date={1941},
				pages={436--452},
				issn={0302-7317},
				review={\MR{18405}},
			}
			
		\end{biblist}
	\end{bibdiv}

\end{document}